%% file: actual_paper.tex
\documentclass[12pt]{article}

\usepackage{tikz}
\usepackage{tikz-cd}
\usepackage{amsmath}
\usepackage{amsthm}
\usepackage{amssymb}
\usepackage{geometry}
\usepackage{subcaption}
\usepackage[font=normalsize]{caption}

\newtheorem{tm}{Theorem}[section]
\newtheorem{df}[tm]{Definition}
\newtheorem{lm}[tm]{Lemma}
\newtheorem{pp}[tm]{Proposition}
\newtheorem{cor}[tm]{Corollary}

\newtheorem{examp}[tm]{Example}
\newtheorem{rmk}[tm]{Remark}

\begin{document}

\title{A Generating Polynomial for Convex Subsets of Posets}

\author{Ian George \and Rowan Yeats\thanks{RY is partially supported by an NSERC Discovery grant and the Canada Research Chairs program.}}

\maketitle

\abstract{
For any finite poset we define a generating polynomial counting upsets, downsets, and their intersection.  We investigate the behaviour of this polynomial with respect to poset operations and show that it distinguishes certain posets constructed with the operations of ordinal sum and disjoint union, including Series-Parallel posets as a special case. Lastly, we comment on connections to the causal set approach to quantum gravity.
}

\section{Introduction}

In this paper we introduce and investigate a generating polynomial for convex sets of a finite poset.  Enumerating upsets/downsets and intervals in posets is a problem that has garnered attention in the recent literature.  For many nice families of posets, such enumeration involves rich combinatorics, e.g. \cite{bousquet2023intervals, azam2023generating, baril2025enumeration, chapoton2006nombre, preville2017enumeration, melou18number, baril2024intervals}.  A convex subset of a poset is the intersection of an upset and downset and thus is a generalization of an interval, that being the case where the upset and downset are each generated by a single element.

Our polynomial is a polynomial with five variables which records convex set size, the sizes of the upset and downset intersecting to give the convex set, and the sizes of the antichains that generate the upset and downset.  In this paper we outline the basic properties of the polynomial and the information it contains.  We give formulas for the polynomial under the operations of disjoint union and ordinal sum, and show that it distinguishes posets in certain families of posets constructed iteratively with these operations.  As a special case, the polynomial distinguishes Series-Parallel posets.

This polynomial is motivated from the causal set approach to quantum gravity.  Previous work has shown that the intervals of a causal set, or locally-finite poset, contain much information of physical significance. We aim to describe ways in which considering all convex sets may extend these results.  There has also been interest in the causal set community in considering the sizes of the upset and downset generated by each element of the poset. Our polynomial carries this information as well and so we also generalize this construction.  

\subsection{Introduction to Generating Functions}

Since the aim of this paper is for it to reach a wider audience including physicists, we include here a short introduction to basic concepts in enumerative combinatorics.  A standard reference for this topic is \cite{wilf2014generatingfunctionology}.  Let $S$ be a (possibly infinite) set.  A function $\omega : S \to \mathbb{N}$ is a \emph{weight function} if for all $n \in \mathbb{N}$, the set $\omega^{-1}(n)$ has finite cardinality.  Given such a weight function, we define the \emph{(ordinary) generating function} of $\omega$ to be the formal power series

\[ f_\omega(z) = \sum_{s \in S} z^{\omega(s)} = \sum_{n \in \mathbb{N}} f_n z^n \]

\noindent where $f_n = |\omega^{-1}(n)|$, and $z$ is an indeterminate.  The idea here is that $S$ is typically some set of discrete objects that we want to count, the weight function is whatever notion of size or weight we want to count with respect to, and then $f_n$ is the number of these objects of weight $n$, which is the thing we want to understand better. The generating function is an algebraic gadget to carry and help us understand the series of coefficients, that is, to help us understand the series of the counts of how many of our objects there are of each weight.  It will be useful to have notation for the operation of \emph{coefficient extraction} from a generating function: $[z^k]f_\omega(z)$ is defined to be the coefficient of $z^k$ in $f_\omega(z)$, that is, $f_k$.  In other words, $[z^k]f_\omega(z) = \frac{1}{k!}\frac{\partial^k}{\partial z^k}f_\omega(z)|_{z=0}$, where the derivative is a formal derivative with respect to $z$.

The ideas above are naturally generalized to the multivariate case.  A multivariate weight function is a function $\omega : S \to \mathbb{N} \times \cdots \times \mathbb{N} = \mathbb{N}^m$ where there are $m$ factors of $\mathbb{N}$, and each $\omega^{-1}((n_1, \ldots, n_m))$ has finite cardinality.  Thus we represent $\omega(s)$ by a vector $(\omega_1(s), \ldots, \omega_m(s))$ where $s \in S$ and $\omega_i$ is an element of the $i$th factor of $\mathbb{N}$.  If $\mathbf{z} = (z_1, \ldots, z_m)$ is a vector of indeterminates, then we define $\mathbf{z}^{\omega(s)} = \prod_{i=1}^m z_i^{\omega_i(s)}$.  The multivariate generating function for $\omega$ is thus

\[ f_\omega(\mathbf{z}) = \sum_{s \in S} \mathbf{z}^{\omega(s)} = \sum_{\mathbf{n} = (n_1, \ldots, n_m) \in \mathbb{N}^m} f_\mathbf{n} \mathbf{z}^\mathbf{n} \]

\noindent where $f_\mathbf{n} = |\omega^{-1}(\mathbf{n})|$.  Coefficient extraction in the multivariate case is a straight-forward extension from the univariate case, as if $\mathbf{k} = (k_1, \ldots, k_m) \in \mathbb{N}^m$ then $[\mathbf{z}^\mathbf{k}]f_\omega(\mathbf{z}) = f_\mathbf{k} = \frac{1}{k_1! \cdots k_m!}\frac{\partial^{k_1}}{\partial z_1^{k_1}} \cdots \frac{\partial^{k_m}}{\partial z_m^{k_m}}f_\omega(\mathbf{z})$.

Generating functions become a powerful tool when natural operations on our discrete objects correspond to algebraic operations on the generating functions.  In that situation, simply understanding how the objects decompose under such nice operations gives equations for the generating functions which can be solved or otherwise analysed in order to extract information about the counts of our objects.  In nice cases, this can give closed form solutions for the counts which may be unobtainable by elementary direct counting arguments, and even when closed forms are not possible, information such as when the objects are equinumerous with other interesting classes or objects, or asymptotics for the counts can often be extracted.

\section{The Polynomial}

\subsection{Definitions}

All posets considered here will be finite, so we will simply say poset instead of finite poset.  We follow \cite{stanley2011enumerative} for standard poset definitions, but we will briefly give the  main definitions here for completeness and to set notation.  Let $P$ be a poset with partial order $\leq$.  For $u, v \in P$, the \emph{interval} $[u, v]$ is the set $\{w \in P \;:\; u \leq w \leq v\}$.  A set $U \subseteq P$ is \emph{upward-closed}, or is an \emph{upset}, if for each $u \in U$, if $v \in P$ and $u \leq v$, then $v \in U$.  Likewise a set $D \subseteq P$ is \emph{downward-closed}, or is a \emph{downset} (sometimes also called an \emph{ideal}), if for each $u \in D$, if $v \in P$ and $v \leq u$, then $v\in D$.  For a set $S \subseteq P$ we say the \emph{upset generated by $S$} is $V(S) = \{v \in P \; : \; u \leq v \; {\rm for \; some} \; u \in S\}$, and similarly the \emph{downset generated by $S$} is $\Lambda(S) = \{u \in P \;:\; u \leq v \; {\rm for \; some} \; v \in S\}$.  Indeed, $V(S)$ is an upset  of $P$ and $\Lambda(S)$ is a downset of $P$.  When $S = \{u\}$, we write $V(u)$ instead of $V(\{u\})$ and $\Lambda(u)$ instead of $\Lambda(\{u\})$ for the \emph{principal upset} and \emph{principal downset} generated by $u$.  Given an upset $U \subseteq P$, there is a unique antichain $A = A(U)$ such that $U = V(A)$.  Likewise for a downset $D \subseteq P$ there is a unique antichain $A = A(D)$ such that $D =\Lambda(A)$.  For a subset $S \subseteq P$, we let $\min(S)$ and $\max(S)$ be the minimal and maximal elements of $S$, respectively.

A subset $X \subseteq P$ is \emph{convex} if $u, v \in X$ implies $[u, v] \subseteq X$.  This is a natural poset-theoretic analogue of the typical notion of convexity.  Given convex $X \subseteq P$, there exists a unique upset $U$ and downset $D$ such that $X = U \cap D$, $A(U) = \min(X) \subseteq D$, and $A(D) = \max(X) \subseteq U$.  In this way, convex sets generalize intervals, since $[u, v] = V(u) \cap \Lambda(v)$.  In the literature, convex sets are sometimes referred to as \emph{interval-closed} sets \cite{elder2023toggling, lafreniere2025interval, elizalde2024enumeration} and we previously, in an extended abstract version of these results, called them \emph{generalized intervals} \cite{george2025generalized}.  Convex sets of posets have received some attention in the combinatorics literature, from investigating the lattice of convex sets of a poset \cite{birkhoff1985convexity} to convex sets in products of chains and rowmotion on convex sets \cite{elder2023toggling, elizalde2024enumeration, lafreniere2025interval}.

Let $\mathcal{X}(P)$ be the set of convex sets of $P$.  Consider the weight function $\omega : \mathcal{X}(P) \to \mathbb{N}^5$ given by $\omega(X) = (|\min(X)|, |\max(X)|, |\Lambda(\max(X))|, |V(\min(X))|, |X|)$.  This is indeed a weight function, since $P$ is finite.  Our generating polynomial is the generating polynomial of $\omega$:

\begin{df}
    We define a generating polynomial $\Phi(P ; s, t, x, y, z)$ for convex subsets of a poset $P$ as follows:

    \[ \Phi(P ; s, t, x, y, z) = \sum_{\substack{X \subseteq P \\ X\;\text{convex}}} s^{|\min(X)|} t^{|\max(X)|} x^{|\Lambda(\max(X))|} y^{|V(\min(X))|} z^{|X|}. \]

\end{df}

\noindent Note that this polynomial will have a constant term of $1$ corresponding to $X = \emptyset$.  This is the only convex set for which the exponent on any of $s, t, x, y, z$ is zero, hence every non-constant term is divisible by $stxyz$. 

\begin{examp}\label{eg single elt}
Writing $\bullet$ for the single element poset we have $\Phi(\bullet; s, t, x, y, z) = 1+stxyz$ since $\bullet$ has exactly two convex subsets, the empty set and the entire poset.
\end{examp}

\begin{examp}
    Consider the poset $P$ in Figure \ref{fig:pos_diamond} with elements $\{a, b, c, d\}$ and non-trivial relations $a \leq b \leq d$ and $a \leq c \leq d$.  The convex set generating polynomial of $P$ is

    \begin{align*}
        \Phi(P ; s, t, x, y, z) = 1+ stxyz(x^3y^3z^3 & + sx^3y^2z^2 +  
        tx^2y^3z^2 + stx^2y^2z \\
        & \quad + 2x^3yz + 2xy^3z + x^3 + y^3 + 2xy).
    \end{align*}
    
    \begin{figure}
        \centering
        \input{figs/poly_example}
        \caption{Diamond poset.}
    \label{fig:pos_diamond}
    \end{figure}
\end{examp}

\subsection{Polynomial Specifications/Combinatorial Information}

We can extract combinatorial information about $P$ by extracting coefficients and taking certain evaluations of $\Phi(P ; s, t, x, y, z)$, as summarized in Table \ref{tab:info_from_poly}.  For example, the number of relations is $[st]\Phi(P ; s, t, 1, 1, 1)$ since a relation corresponds precisely with non-empty intervals, which are convex sets with a unique minimal and maximal element.  Setting $x=y=z=1$ simply means we seek the total number of relations, and ignore the sizes of intervals/upsets/downsets that they form.

\begin{table}[h]
    \centering
    \begin{tabular}{|c||c|}
    \hline
    Poset information/property & How to obtain it \\
    \hline
    Number of elements & \begin{tabular}{@{}c@{}}$[stz]\Phi(P ; s, t, 1, 1, z)$, or maximum $n$ \\ such that $[z^n]\Phi(P ; s, t, x, y, z) \neq 0$\end{tabular} \\
    \hline
    Number of cover relations & $[stz^2]\Phi(P ; s, t, 1, 1, z)$ \\
    \hline
    Number of relations & $[s t] \Phi(P ; s, t, 1, 1, 1)$ \\
    \hline
    Number of minimal elements & $m$ such that $[s^m y^{|P|}]\Phi(P ; s, t, x, y, z) \neq 0$ \\
    \hline
    Number of maximal elements & $m$ such that $[t^m x^{|P|}]\Phi(P ; s, t, x, y, z) \neq 0$ \\
    \hline
    Number of antichains of size $k$ & $[s^k t^k z^k]\Phi(P ; s, t, 1, 1, z)$ \\
    \hline
    Poset width & Maximum $k$ such that $[s^kt^kz^k]\Phi(P ; s, t, x, y, z) \neq 0$ \\
    \hline
    \end{tabular}
    \caption{Extracting information about $P$ from the polynomial.}
    \label{tab:info_from_poly}
\end{table}

In addition to the above information, we can specialize $\Phi(P; s, t, x, y, z)$ to other generating polynomials.  In Table \ref{tab:other_gens_from_poly} we summarize some other generating polynomials that can be obtained from $\Phi$.  Note that in any term of the polynomial the exponent of $x$ and $y$ is greater than or equal to the exponent of $z$, and the exponent of $z$ is greater than or equal to the exponent of $s$ and $t$.  Thus for any of the substitutions involving $w^{-1}$, the result is still a polynomial and so we may set $w=0$.  For example, the generating function of upsets (by cardinality) is $\Phi(P ; 1, 1, 1, yw, 1/w)|_{w=0}$ since an upset is precisely a convex set $X$ for which $|X| = |V(\min(X))|$.  Thus upsets correspond to terms of $\Phi(P ; 1, 1, 1, yw, 1/w)$ for which the exponent of $w$ is zero.  Since the exponent of $y$ in such terms is $|X|$, the result follows.

The generating polynomial for downsets appears as a special case in \cite{proctor1984bruhat}.  Here, the author defined a generating polynomial for \emph{flags} of downsets, that is, sequences of downsets totally ordered by inclusion.  Of particular interest was the case of minuscule posets.  The generating polynomial of downsets is simply the special case of considering flags of length one.  In \cite{ding2019antichain}, the authors studied the antichain generating polynomials for Cartesian products of finite, graded posets with chains.  They also considered the case when the non-chain factor is a minuscule poset, showing that these antichain generating polynomials are related to certain Narayana polynomials.  Lastly, the generating polynomials for principal downsets/upsets (or both) is of possible interest from the causal set perspective \cite{sorkin_comm}: these polynomials are polynomial forms of so-called pf-invariants, which have been investigated (unpublished) for their causal set distinguishing properties.

\begin{table}[h]
    \centering
    \begin{tabular}{|c||c||c|}
    \hline
    Generating polynomial for sizes of... & How to obtain it & Found in \\
    \hline
    Convex sets & $\Phi(P ; 1, 1, 1, 1, z)$ & \\
    \hline
    Upsets, downsets, convex sets & $\Phi(P ; 1, 1, x, y, z)$ & \\
    \hline
    Upsets & $\Phi(P ; 1, 1, 1, yw, 1/w)|_{w=0}$ & \\
    \hline
    Downsets & $\Phi(P ; 1, 1, xw, 1, 1/w)|_{w=0}$ & \cite{ding2019antichain}, \cite{proctor1984bruhat} \\
    \hline
    Principal upsets & $[s]\Phi(P ; s, 1, 1, yw, 1/w)|_{w=0}$ & \cite{sorkin_comm} \\
    \hline
    Principal downsets & $[t]\Phi(P ; 1, t, 1, yw, 1/w)|_{w=0}$ & \cite{sorkin_comm} \\
    \hline
    Principal upsets and downsets & $[z] \Phi(P ; 1, 1, x, y, z)$ & \cite{sorkin_comm} \\
    \hline
    Intervals & $[st] \Phi(P ; s, t, 1, 1, z)$ & \\
    \hline
    Principal upsets, downsets, intervals & $[st] \Phi(P ; s, t, x, y, z)$ & \\
    \hline
    Antichains & $\Phi(P ; 1/w, 1, 1, 1, zw)|_{w=0}$ & \cite{ding2019antichain, chen2026antichain} \\
    \hline
    \end{tabular}
    \caption{Other generating polynomials obtained from the polynomial.}
    \label{tab:other_gens_from_poly}
\end{table}

\subsection{Distinguishability}

We saw in the previous section some of the combinatorial information that is captured in our generating polynomial.  We now ask the natural question: exactly how much can we determine about a poset $P$ just from its polynomial? 
Computations in SageMath \cite{sagemath} reveal that $P$ is not determined (up to isomorphism) by its convex set polynomial. 
At six elements, the two posets of Figure \ref{fig:6_elem_non_iso} are the smallest two posets that are indistinguishable by their polynomials.  For seven elements there are 16 pairs of posets which are not distinguished, while for eight elements there 249 pairs.  Appendix \ref{app:A} lists the 16 pairs with seven elements.  

\begin{figure}[h]
    \centering
    \input{figs/6elem_not_dist}
    \caption{The two posets with six elements having equal convex set generating polynomial.}
    \label{fig:6_elem_non_iso}
\end{figure}

We also remark on the distinguishing power of the lattice of convex sets explored in \cite{birkhoff1985convexity}.  The authors gave necessary and sufficient conditions for two posets to have isomorphic lattices of convex sets.  From these conditions, which we will not give the details of here, it follows that the posets of Figure \ref{fig:6_elem_non_iso} have non-isomorphic lattices of convex sets.  However, any poset and its dual have isomorphic lattices of convex sets.  Thus the poset on the right of Figure \ref{fig:6_elem_non_iso}, for example, is not distinguished from its dual by the lattice of convex sets while it is distinguished by the convex set generating polynomial.  Therefore, these two objects possess distinct distinguishing capabilities.

\subsection{Poset Operations}

As $\Phi(P ; s, t, x, y, z)$ is a generating polynomial encoding combinatorial information about $P$, it is of interest to know how it behaves with poset operations.  Given a poset $P$, its \emph{dual} $P^*$ is the poset with the same underlying set but with relations $u \leq_{P^*} v$ if and only if $v \leq_P u$.  It is immediate from this definition that $S$ is an upset of $P$ if and only if $S$ is a downset of $P^*$, and $A$ is an antichain of $P$ if and only if it is an antichain of $P^*$.  Thus we find that $\Phi(P^* ; s, t, x, y, z) = \Phi(P ; t, s, y, x, z)$.

Given two posets $P, Q$ on disjoint underlying sets, their \emph{disjoint union} $P+Q$ is the poset with underlying set $P \cup Q$ and relations $u \leq_{P + Q} v$ if and only if $u, v \in P$ and $u \leq_P v$ or $u, v \in Q$ and $u \leq_Q v$.

\begin{pp}
    $\Phi(P+Q ; s, t, x, y, z) = \Phi(P ; s, t, x, y, z) \Phi(Q ; s, t, x, y, z)$.
    \label{prop:union}
\end{pp}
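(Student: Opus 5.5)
The plan is to exhibit a weight-preserving bijection between $\mathcal{X}(P+Q)$ and $\mathcal{X}(P)\times\mathcal{X}(Q)$, where a pair $(X_P,X_Q)$ carries the weight $\omega(X_P)+\omega(X_Q)\in\mathbb{N}^5$. The product $\Phi(P)\Phi(Q)$ is exactly the generating function of this product set with the additive weight, so the identity follows once the bijection is checked to be weight-preserving.

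\textbf{The bijection.} Send $X \subseteq P+Q$ to $(X\cap P, X\cap Q)$, and send a pair back to $X_P\cup X_Q$. These maps are mutually inverse because $P$ and $Q$ are disjoint as sets.

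\textbf{Convexity.} For $u,v\in P+Q$ with $u\le v$, the two elements lie in the same component, and the interval $[u,v]$ computed in $P+Q$ equals the interval computed in that component. Convexity only constrains comparable pairs, so $X$ is convex in $P+Q$ if and only if $X\cap P$ is convex in $P$ and $X\cap Q$ is convex in $Q$. The empty set causes no trouble: for instance, $X=X_P$ with $X_Q=\emptyset$ is handled the same way.

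\textbf{Additivity of the weights.} I will check each of the five coordinates of $\omega$.
\begin{itemize}
\item Since elements of $P$ and $Q$ are incomparable, $\min(X)=\min(X\cap P)\cup\min(X\cap Q)$, and likewise for $\max$. Hence the $s$- and $t$-exponents add.
\item For any $S\subseteq P+Q$, we have $V(S)=V_P(S\cap P)\cup V_Q(S\cap Q)$, and similarly for $\Lambda$, again by incomparability across components. Taking $S=\min(X)$ and using disjointness, $|V(\min X)|=|V_P(\min X_P)|+|V_Q(\min X_Q)|$. The same argument gives additivity of $|\Lambda(\max X)|$.
\item Clearly $|X|=|X_P|+|X_Q|$.
\end{itemize}
Therefore $\omega(X)=\omega(X_P)+\omega(X_Q)$. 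The empty-set conventions are consistent, since $\omega(\emptyset)=0$.

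\textbf{Conclusion.} Summing $\mathbf{z}^{\omega(X)}=\mathbf{z}^{\omega(X_P)}\mathbf{z}^{\omega(X_Q)}$ over the bijection gives the product formula.

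I expect no real obstacle. The only point needing care is that the operators $\min$, $\max$, $V$ and $\Lambda$ are computed in $P+Q$ rather than in the components; that they agree with the componentwise versions is exactly the incomparability observation above.
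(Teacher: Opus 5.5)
Your proof is correct and follows the same route as the paper. The paper's one-line argument notes that every convex set of $P+Q$ splits into a (possibly empty) convex set of $P$ and a (possibly empty) convex set of $Q$; your explicit bijection $X\mapsto (X\cap P, X\cap Q)$ and your coordinate-by-coordinate check that $\omega$ is additive supply the details the paper leaves implicit.
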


\begin{proof}
    This follows since every convex set of $P+Q$ is either empty, a subset of $P$, a subset of $Q$, or the disjoint union of a convex set of $P$ and a convex set of $Q$.
\end{proof}

The \emph{ordinal sum}, $P \oplus Q$, of $P$ and $Q$ also has underlying set $P \cup Q$, but has the relations of $P+Q$ as well as the relations $u \leq_{P \oplus Q} v$ if $u \in P$, $v \in Q$.  The cover relations of $P \oplus Q$ are precisely the cover relations of $P$ and $Q$ along with the relations $u \leq v$ such that $u$ is a maximal element of $P$ and $v$ a minimal element of $Q$.

\begin{pp}
    \begin{align*}
        & \Phi(P \oplus Q ; s, t, x, y, z) \\& = y^{|Q|}(\Phi(P; s, t, x, y, z)-1) + x^{|P|}(\Phi(Q; s, t, x, y, z)-1) \\
        & \quad + x^{|P|} y^{|Q|} (\Phi(P ; s, 1, 1, yw, z/w)|_{w=0}-1)(\Phi(Q ; 1, t, xw, 1, z/w)|_{w=0}-1) + 1.
    \end{align*}
    \label{pp:ordinal_sum}
\end{pp}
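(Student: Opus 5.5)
The plan is to partition the nonempty convex sets $X$ of $P \oplus Q$ into three classes: $X \subseteq P$, $X \subseteq Q$, and $X$ meeting both $P$ and $Q$. The empty set gives the constant $1$. For each class I will determine how the five statistics $|\min(X)|$, $|\max(X)|$, $|\Lambda(\max(X))|$, $|V(\min(X))|$, $|X|$ in $P\oplus Q$ relate to the corresponding statistics in $P$ or $Q$.

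\textbf{Case $X \subseteq P$, nonempty.} Convexity in $P \oplus Q$ restricted to $P$ is the same as convexity in $P$, because intervals between elements of $P$ lie in $P$. The sets $\min(X)$ and $\max(X)$ and their sizes are unchanged. The downset $\Lambda(\max(X))$ is computed inside $P$ and so is unchanged. The upset $V(\min(X))$ in $P\oplus Q$ is the $P$-upset together with all of $Q$, so its size grows by $|Q|$. These sets therefore contribute $y^{|Q|}(\Phi(P)-1)$.

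\textbf{Case $X \subseteq Q$, nonempty.} Dually, these contribute $x^{|P|}(\Phi(Q)-1)$.

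\textbf{Case $X$ meets both $P$ and $Q$.} This is the main step. Write $X_P = X\cap P$ and $X_Q = X \cap Q$, both nonempty. Every element of $X_P$ lies below every element of $X_Q$. So if $u\in X_P$ and $v \in P$ with $u\le v$, then $v$ lies in $[u,q]$ for any $q\in X_Q$, hence $v\in X$. Thus $X_P$ is an upset of $P$, and dually $X_Q$ is a downset of $Q$. Conversely, for any nonempty upset $U$ of $P$ and nonempty downset $D$ of $Q$, the union $U\cup D$ is convex. This needs a short case check on intervals $[a,b]$ with $a,b \in U\cup D$: if $a \in U$ and $b \in D$, any $w \in [a,b]$ lies either in $P$ above $a$ or in $Q$ below $b$.

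For such $X = U\cup D$ the statistics are as follows.
\begin{itemize}
\item $\min(X) = \min(U)$, so $s$ records the antichain generating $U$.
\item $\max(X) = \max(D)$, so $t$ records the antichain generating $D$.
\item $V(\min X) = U \cup Q$, of size $|U|+|Q|$.
\item $\Lambda(\max X) = P \cup D$, of size $|P|+|D|$.
\item $|X| = |U|+|D|$.
\end{itemize}
The contribution is therefore $x^{|P|}y^{|Q|}$ times a product of two sums: $\sum_U s^{|\min U|}y^{|U|}z^{|U|}$ over nonempty upsets $U$ of $P$, and $\sum_D t^{|\max D|}x^{|D|}z^{|D|}$ over nonempty downsets $D$ of $Q$.

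It remains to identify these sums with the specializations in the statement. In $\Phi(P; s,1,1,yw,z/w)$, a convex set $X$ contributes $w^{|V(\min X)|-|X|}$. Since $|V(\min X)|\ge |X|$, this is a polynomial in $w$. Setting $w=0$ keeps exactly those $X$ with $X = V(\min X)$, i.e.\ the upsets, and each contributes $s^{|\min X|}y^{|X|}z^{|X|}$. Subtracting $1$ removes the empty set. The $Q$ factor is handled symmetrically, using $x w$ and $\Lambda(\max X)$ to select the downsets. Summing the three cases and the constant $1$ gives the formula.

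I expect the only delicate points to be the convexity characterization in the mixed case and checking that the $w=0$ specialization is legitimate and selects exactly the upsets (respectively downsets).
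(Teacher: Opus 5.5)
Your proposal is correct and follows essentially the same route as the paper: the same split into $X=\emptyset$, $X\subseteq P$, $X\subseteq Q$, and the mixed case where $X$ is the union of a nonempty upset of $P$ and a nonempty downset of $Q$, followed by the $w=0$ specialization to extract upsets and downsets. You prove the mixed-case characterization directly from convexity (via $v\in[u,q]$) rather than through the paper's canonical decomposition $X=U\cap D$, and you check the five statistics and the validity of setting $w=0$ explicitly where the paper cites its table, but these are differences of detail, not of approach.
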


\begin{proof}
    There are four cases for a convex set $X \subseteq P \oplus Q$.  The first case is $X = \emptyset$, corresponding to the constant $1$.  The terms for $X \neq \emptyset$ and $X \subseteq P$ correspond to terms of $\Phi(P ; s, t, x, y, z) - 1$ multiplied by $y^{|Q|}$ since every upset $U$ in $P \oplus Q$ generated by elements in $P$ can be written as $U = (U \cap P) \cup Q$, and conversely.  A similar argument holds for convex $X \subseteq Q$, $X \neq \emptyset$, and terms of $x^{|P|}(\Phi(Q; s, t, x, y, z)-1)$.  The final case is for convex sets $X$ having non-trivial intersection with $P$ and $Q$.  In this case, write $X = U \cap D$.  Then $U = (U \cap P) \cup Q$, $D = (D \cap Q) \cup P$, and $X = (U \cap P) \cup (D \cap Q)$.  Thus $X$ is the union of an arbitrary non-empty upset of $P$ and non-empty downset of $Q$.  Furthermore, every such union is convex in $P \oplus Q$.  Thus it follows, from Table \ref{tab:other_gens_from_poly}, and keeping track of antichain, upset, and downset sizes, that the remaining terms of $\Phi(P \oplus Q ; s, t, x, y, z)$ are exactly the terms of

    \[ x^{|P|} y^{|Q|} (\Phi(P ; s, 1, 1, yw, z/w)|_{w=0}-1)(\Phi(Q ; 1, t, xw, 1, z/w)|_{w=0}-1). \]
    
\end{proof}

Two useful special cases of ordinal sum are the operation of adding a new maximum element above all elements of a given poset $P$, or of adding a new minimum element below all elements of $P$.  Proposition \ref{pp:ordinal_sum} and Example~\ref{eg single elt} give the following.
\begin{cor}
\begin{align*}
    \Phi(P \oplus \bullet ; s, t, x, y, z) & = y(\Phi(P; s, t, x, y, z)-1) + stx^{|P|+1}yz \\
        & \quad + tx^{|P|+1} yz (\Phi(P ; s, 1, 1, yw, z/w)|_{w=0}-1) + 1, \\
    \Phi(\bullet \oplus P ; s, t, x, y, z) & = stxy^{|P|+1}z + x(\Phi(P; s, t, x, y, z)-1) \\
        & \quad + sx y^{|P|+1}z(\Phi(P ; 1, t, xw, 1, z/w)|_{w=0}-1) + 1 .  
\end{align*}
\end{cor}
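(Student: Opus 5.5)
The plan is to specialize Proposition \ref{pp:ordinal_sum} with $Q = \bullet$ for the first identity and $P = \bullet$ for the second. In each case the only inputs we need are $\Phi(\bullet; s,t,x,y,z) = 1 + stxyz$ from Example~\ref{eg single elt}, $|\bullet| = 1$, and the two specialized polynomials of $\bullet$ that appear in the cross term.

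For $P \oplus \bullet$, the first term of Proposition \ref{pp:ordinal_sum} becomes $y(\Phi(P)-1)$. The second term becomes $x^{|P|}\cdot stxyz = stx^{|P|+1}yz$. For the cross term we compute the downset polynomial of $\bullet$:
\[ \Phi(\bullet; 1, t, xw, 1, z/w)|_{w=0} = \bigl(1 + t\,(xw)\,(z/w)\bigr)\big|_{w=0} = 1 + txz. \]
Subtracting $1$ gives $txz$. The cross term is therefore $x^{|P|} y \cdot txz\,(\Phi(P; s,1,1,yw,z/w)|_{w=0} - 1) = tx^{|P|+1}yz\,(\Phi(P; s,1,1,yw,z/w)|_{w=0}-1)$. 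Adding the constant $1$ yields the first formula.

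The second identity follows symmetrically, taking $P = \bullet$ and leaving $Q$ as the given poset. The first term becomes $y^{|P|}\cdot stxyz = stxy^{|P|+1}z$, and the second becomes $x(\Phi(P)-1)$. For the cross term we compute the upset polynomial of $\bullet$:
\[ \Phi(\bullet; s,1,1,yw,z/w)|_{w=0} = 1 + syz. \]
The cross term is then $x\,y^{|P|}\cdot syz\,(\Phi(P;1,t,xw,1,z/w)|_{w=0}-1) = sxy^{|P|+1}z\,(\cdots)$. Alternatively, one can apply duality, $\Phi(P^*) = \Phi(P; t,s,y,x,z)$, to the first identity, since $(\bullet\oplus P)^* = P^*\oplus\bullet$.

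There is no real obstacle. The only point needing care is evaluating at $w=0$: for $\bullet$, the $w$-factors cancel exactly in the single nonconstant term, so the specializations are well defined. The rest is bookkeeping of exponents.
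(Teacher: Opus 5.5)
Your proposal is correct and is exactly the paper's argument: the paper obtains both identities by specializing Proposition~\ref{pp:ordinal_sum} to $Q=\bullet$ and to $P=\bullet$, using $\Phi(\bullet;s,t,x,y,z)=1+stxyz$ from Example~\ref{eg single elt}. Your computations of the specialized one-point polynomials ($1+txz$ and $1+syz$) and the resulting cross terms are correct, and your duality alternative via $(\bullet\oplus P)^*=P^*\oplus\bullet$ also checks out.
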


The Cartesian product $P \times Q$ of posets $P, Q$ is the poset with underlying set $P \times Q$ (the ordinary Cartesian product of sets) and relations $(u_P, u_Q) \leq (v_P, v_Q)$ if and only if $u_P \leq v_P$ and $u_Q \leq v_Q$.  This is a commutative operation, i.e. $P \times Q \cong Q \times P$.  

\begin{rmk}
There is no formula for $\Phi(P \times Q ; s, t, x, y, z)$ in terms of $\Phi(P ; s, t, x, y, z)$ and $\Phi(Q ; s, t, x, y, z)$: consider the posets $P$ and $Q$ of Figure \ref{fig:6_elem_non_iso} with six elements.  We have $\Phi(P ; s, t, x, y, z) = \Phi(Q ; s, t, x, y, z)$, while direct computation gives that $\Phi(P \times [2] ; s, t, x, y, z) \neq \Phi(Q \times [2] ; s, t, x, y, z)$.  The same is true for the simpler polynomials $\Phi(\cdot ; s, t, 1, 1, z)$, $\Phi(\cdot ; 1, 1, x, y, z)$, and $\Phi(\cdot ; 1, 1, 1, 1, z)$, the above example providing the same argument.
\end{rmk}

\subsection{Families of Posets}

Here we discuss formulas for the convex set generating function of specific families of posets.  For the first two we simply write down the polynomial directly from understanding the convex subsets, while the latter two make use of the operations on our generating functions.

\begin{examp}   
    If $P = [n]$ is a chain consisting of $n$ elements, then the convex sets are simply the intervals which we can index by their size and one end, giving

    \begin{align*}
        \Phi([n]; s, t, x, y, z) & = 1 + st \sum_{k=1}^n z^k \sum_{i=0}^{n-k} x^{k+i}y^{n-i} \\
        & = 1 + st\sum_{k=1}^n z^k \frac{x^{n+1}y^k - x^k y^{n+1}}{x-y} \\
        & = 1 + \frac{st}{x-y} \left( x^{n+1} \frac{1-(yz)^{n+1}}{1-yz} - y^{n+1}\frac{1-(xz)^{n+1}}{1-xz} \right).
    \end{align*}
\end{examp}

\begin{examp}
    The convex sets of $P=[n]\times[2]$ can also be worked out directly.  
    
    There are four types of antichains, the empty set, a single element on the lower chain $[n]\times\{1\}$, a single element on the upper chain $[n]\times\{2\}$, or one element on each of the upper and lower chains with the index of the element on the lower chain strictly larger than the index of the element on the upper chain.

    Given an upper antichain for a convex set, for each of the four types observe which types are possible for the lower antichain of the convext set. If the upper antichain is the empty set then so must be the lower antichain for the convex set.  If the upper antichain is of the second type, then so must be the lower antichain and the index of the element of the lower antichain must be weakly less than the index of the index of the element of the upper antichain.  If the upper antichain is of the third type, then the lower antichain can be of the second, third, or fourth type, with all indices weakly less than the index of the element of the upper antichain.  If the upper antichain is of the fourth type,  then the lower antichain is of the second or fourth type.  If it is of the second type then the index of the element in the lower antichain is weakly less than both indices of the upper antichain, while if the lower antichain is of the fourth type then the index of the element of the lower antichain on the lower chain is weakly less than the index of the element of the upper antichain which is on the lower chain and the element of the lower antichain on the upper chain is weakly less than the index of the element of the upper antichain which is on the upper chain.

    Putting this together we get the following formula.
    \begin{align*}
        & \Phi([n]\times[2]; s,t,x,y,z) \\& = 
        1 + st\sum_{1\leq i\leq j\leq n}x^jy^{2(n-i+1)}z^{j-i+1} + st\sum_{1\leq i\leq j\leq n}x^{2j}y^{2(n-i+1)}z^{2(j-i+1)} \\
        & \quad + st\sum_{1\leq i \leq j \leq n}x^{2j}y^{n-i+1}z^{j-i+1} + s^2t\sum_{1\leq i < j \leq k \leq n}x^{2k}y^{2n-i-j+2}  z^{2k-j-i+2} \\
        & \quad + st^2\sum_{1\leq i \leq j < k\leq n} x^{j+k}y^{2(n-i+1)}z^{j+k-2i+2} + s^2t^2\sum_{\substack{1\leq i < j \leq n \\ 1 \leq k < \ell \leq n \\ i \leq k, j \leq \ell}}x^{k+\ell}y^{2n-i-j+2}  z^{k-i+\ell-j+2}.
    \end{align*}
    The number of convex subsets of $[n]\times [2]$ is obtained by setting $s=t=x=y=z=1:$
    \begin{align*}
        \Phi([n]\times[2]; 1,1,1,1,1) & 
        = 1 + 3\binom{n+1}{2} + 2\binom{n+1}{3} + \left(\binom{n}{2} + 3\binom{n}{3} + 2 \binom{n}{4}\right)  \\
        & = 1 + \binom{n+1}{2}\left(2 + \frac{1}{3}\binom{n+2}{2}\right),
    \end{align*}
    where the terms in the final parentheses on the first line are counting pairs as indexed in the final sum of the expression for $\Phi([n]\times[2]; s,t,x,y,z)$.  This agrees after elementary rewriting with the calculation of the number of convex subsets given in Theorem 4.2 of \cite{elder2023toggling}.

    Convex subsets of the product of two chains $P = [n]\times [m]$ have also been studied with the total number given in \cite{barnette2019number}.  There are some nice bijective properties: antichains of $P$ are in correspondence with other combinatorial objects such as alignments of strings, words in a three-letter alphabet, and endpoints of subsequences in walks \cite{bouyssou2024maximal}, while the convex subsets themselves are in bjection with bicoloured Motzkin paths \cite{elizalde2024enumeration}.  Our $\Phi(P ; s, t, x, y, z)$ then enumerates these things with the additional parameters, but itself is apparently too heavy to have nice formulas in this case.
    One of the reasons others have been interested in convex subsets of $[n]\times[m]$ is rowmotion \cite{elder2023toggling, lafreniere2025interval}.
\end{examp}

The power of generating functions is that we can proceed much more easily when can work with our operations as in the following examples.

\begin{examp}\label{eg antichain}
    Write $\bullet^n$ for the antichain of $n$ elements.  Then by Example~\ref{eg single elt} and Proposition~\ref{prop:union} $\Phi(\bullet^n; s, t, x, y, z) = (1+stxyz)^n$.
    By Proposition~\ref{pp:ordinal_sum} 
    \begin{align*}
        \Phi(\bullet^n \oplus \bullet^m; s,t,x,y,z) 
        & = y^m((1+stxyz)^n-1) + x^n((1+stxyz)^m -1 ) \\
        & \quad + x^ny^m(1+syz)^n(1+txz)^m + 1.
    \end{align*}
    This can be generalized as given in the following proposition.
\end{examp} 

\begin{pp}\label{prop complete layers}
\begin{align*}
    & \Phi(\bullet^{n_1}\oplus \bullet^{n_2} \oplus \cdots \oplus \bullet^{n_k}; s,t,x,y,z) \\
    & = 1 + \sum_{i=1}^k x^{n_1+ \cdots + n_{i-1}}y^{n_{i+1} + \cdots + n_k}((1+stxyz)^{n_i}-1) \\
    & \quad + \sum_{1\leq i < j \leq k} x^{n_1+ \cdots + n_{j-1}}y^{n_{i+1}+ \cdots + n_k}z^{n_{i+1}+ \cdots + n_{j-1}}((1+syz)^{n_i}-1)((1+txz)^{n_j}-1).
\end{align*}    
\end{pp}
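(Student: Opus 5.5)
We argue directly by classifying the nonempty convex sets $X$ of $P=\bullet^{n_1}\oplus\cdots\oplus\bullet^{n_k}$ according to which layers they meet. Write $L_i$ for the $i$th layer $\bullet^{n_i}$. Let $i$ be the smallest and $j$ the largest index with $X\cap L_i\neq\emptyset$, $X\cap L_j\neq\emptyset$.

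\emph{Case $i=j$.} Then $X$ is a nonempty subset of $L_i$, hence an antichain, so $\min(X)=\max(X)=X$. Since every element of $L_i$ lies above all of $L_1,\dots,L_{i-1}$ and below all of $L_{i+1},\dots,L_k$, we get $|\Lambda(\max X)|=n_1+\cdots+n_{i-1}+|X|$ and $|V(\min X)|=n_{i+1}+\cdots+n_k+|X|$. Conversely, every nonempty subset of $L_i$ is convex. Summing over subsets of size $m$ gives $x^{n_1+\cdots+n_{i-1}}y^{n_{i+1}+\cdots+n_k}\bigl((1+stxyz)^{n_i}-1\bigr)$. This is the first sum; the constant $1$ accounts for $X=\emptyset$.

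\emph{Case $i<j$.} Every element of $L_l$ with $i<l<j$ lies in an interval between some element of $X\cap L_i$ and some element of $X\cap L_j$, so convexity forces $L_{i+1}\cup\cdots\cup L_{j-1}\subseteq X$. Conversely, for arbitrary nonempty $A\subseteq L_i$ and $B\subseteq L_j$, the set $A\cup L_{i+1}\cup\cdots\cup L_{j-1}\cup B$ is convex, because any interval between two of its elements only passes through layers strictly between its endpoints' layers. For such $X$ we have $\min(X)=A$ (elements of $A$ are incomparable, and everything else in $X$ is above $A$) and $\max(X)=B$. Then:
\begin{itemize}
\item $\Lambda(B)=L_1\cup\cdots\cup L_{j-1}\cup B$, of size $n_1+\cdots+n_{j-1}+|B|$;
\item $V(A)=A\cup L_{i+1}\cup\cdots\cup L_k$, of size $|A|+n_{i+1}+\cdots+n_k$;
\item $|X|=|A|+|B|+n_{i+1}+\cdots+n_{j-1}$.
\end{itemize}
The weight therefore factors as
\[
x^{n_1+\cdots+n_{j-1}}\,y^{n_{i+1}+\cdots+n_k}\,z^{n_{i+1}+\cdots+n_{j-1}}\,(syz)^{|A|}(txz)^{|B|}.
\]
Summing over nonempty $A$ and $B$ gives $((1+syz)^{n_i}-1)((1+txz)^{n_j}-1)$, which yields the second sum.

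An alternative route is induction on $k$ using Proposition~\ref{pp:ordinal_sum}, with $P=\bullet^{n_1}\oplus\cdots\oplus\bullet^{n_{k-1}}$ and $Q=\bullet^{n_k}$. That requires computing the upset generating function $\Phi(P;s,1,1,yw,z/w)|_{w=0}$ of the layered poset, whose nonempty upsets are $A\cup L_{i+1}\cup\cdots\cup L_{k-1}$, and then matching terms. It is messier than the direct argument.

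The main step needing care is the convexity characterization in the case $i<j$: that the intermediate layers are forced into $X$, and that the minimal and maximal elements are exactly $A$ and $B$. This relies on every element of $L_i$ being below every element of $L_l$ for $l>i$. The remaining steps are bookkeeping of exponents.
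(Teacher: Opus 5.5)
Your proof is correct, but it takes a different route from the paper's.

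\textbf{The paper's route.} The paper argues by induction on $k$. It writes the poset as $(\bullet^{n_1}\oplus\cdots\oplus\bullet^{n_{k-1}})\oplus\bullet^{n_k}$ and applies Proposition~\ref{pp:ordinal_sum}. The real work is extracting the upset polynomial $\Phi(\cdot\,;s,1,1,yw,z/w)|_{w=0}$ of the $(k-1)$-layer poset from the inductive formula by tracking powers of $w$. Only two kinds of terms survive: the top-layer term of the first sum, and the $j=k-1$ terms of the second sum with the top-degree part $(txz)^{n_{k-1}}$ of $(1+txz)^{n_{k-1}}$. The paper then multiplies out and regroups.

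\textbf{Your route.} You characterize the convex sets outright: those meeting layers $i<j$ are exactly $A\cup L_{i+1}\cup\cdots\cup L_{j-1}\cup B$ with $A\subseteq L_i$ and $B\subseteq L_j$ nonempty, and the weight visibly factors.

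\textbf{What each buys.} Your version has several advantages:
\begin{itemize}
\item It is self-contained.
\item It makes each summand's meaning transparent: the $(i,j)$ term of the second sum is precisely the generating function of convex sets with bottom layer $i$ and top layer $j$.
\item It avoids the $w$-bookkeeping, where slips are easy. The paper's displayed upset polynomial drops the factor $y^{n_{i+1}+\cdots+n_{k-1}}$, although its final formula is right.
\item It extends directly to ordinal sums $P_1\oplus\cdots\oplus P_k$ of arbitrary posets, with nonempty upsets of $P_i$ and downsets of $P_j$ in place of $A$ and $B$.
\end{itemize}
The paper's route, in exchange, reuses the case analysis already done in Proposition~\ref{pp:ordinal_sum} rather than redoing it. It is also meant to illustrate that section's point: the operation formulas let one compute $\Phi$ for a whole family purely algebraically, without re-examining the convex sets themselves.
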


\begin{proof}
    The proof is by induction on $k$.  The base case is Example~\ref{eg antichain}.  The key that allows nice simplification is how the different terms behave under the substitution of $(s,t,z,y,z)\leftarrow (s,1,1,yw,z/w)$ and evaluation at $w=0$.  Note that $1+stxyz$ becomes $1+syz$ under the substitution and so is unaffected by the evaluation at $w=0$. All the terms of the first sum that have at least one explicit $y$ in the pre-factor will hence have a positive power of $w$ and so be sent to $0$ by the evaluation.  The only term of the first sum that will remain is the last one.  For the terms in the second sum, $1+syz$ is unchanged under the substitution.  Positive powers of $w$ come from the explicit $y$ and negative powers of $w$ come from the explicit $z$ along with the $z$ in the $(1+txz)^{n_j}$ term.  So the only way to obtain $w^0$ is when $j$ is maximal and when from each factor of $(1+txz)^{n_j}$ we take the $txz$.  Thus
    {\allowdisplaybreaks
    \begin{align*}
        & \Phi(\bullet^{n_1}\oplus \bullet^{n_2} \oplus \cdots \oplus \bullet^{n_k}; s,t,x,y,z) \\
        & = \Phi((\bullet^{n_1}\oplus \bullet^{n_2} \oplus \cdots \oplus \bullet^{n_{k-1}} )\oplus \bullet^{n_k}; s,t,x,y,z) \\
        & = y^{n_k}(\Phi(\bullet^{n_1}\oplus \bullet^{n_2} \oplus \cdots \oplus \bullet^{n_{k-1}}; s,t,x,y,z) - 1) + x^{n_1+\cdots + n_{k-1}}((1+stxyz)^{n_k} -1) \\ 
        & \quad + x^{n_1+\cdots + n_{k-1}}y^{n_k}(\Phi(\bullet^{n_1}\oplus \bullet^{n_2} \oplus \cdots \oplus \bullet^{n_{k-1}}; s,1,1,yw,z/w)|_{w=0} - 1)((1+txz)^{n_{k}}-1) \\
        & = \sum_{i=1}^{k-1} x^{n_1+ \cdots + n_{i-1}}y^{n_{i+1} + \cdots + n_{k-1}+n_k}((1+stxyz)^{n_i}-1)  + x^{n_1+\cdots + n_{k-1}}((1+stxyz)^{n_k} -1) \\
        & \quad + \sum_{1\leq i < j \leq k-1} x^{n_1+ \cdots + n_{j-1}}y^{n_{i+1}+ \cdots + n_k}z^{n_{i+1}+ \cdots + n_{j-1}}((1+syz)^{n_i}-1)((1+txz)^{n_j}-1) \\
        & \quad + x^{n_1+\cdots + n_{k-1}}y^{n_k}((1+txz)^{n_{k}}-1) \\
        & \qquad \qquad \cdot \left( 1 + ((1+syz)^{n_{k-1}}-1) + \sum_{1\leq i < k-1} z^{n_{i+1}+ \cdots + n_{k-1}}((1+syz)^{n_i}-1) - 1\right)\\
        & = 1 + \sum_{i=1}^k x^{n_1+ \cdots + n_{i-1}}y^{n_{i+1} + \cdots + n_k}((1+stxyz)^{n_i}-1) \\
    & \quad + \sum_{1\leq i < j \leq k} x^{n_1+ \cdots + n_{j-1}}y^{n_{i+1}+ \cdots + n_k}z^{n_{i+1}+ \cdots + n_{j-1}}((1+syz)^{n_i}-1)((1+txz)^{n_j}-1).
    \end{align*}
    }
\end{proof}

Evaluating at $s=t=x=y=z=1$ we obtain the number of convex sets of ordinal sums of antichains
\[
  \Phi(\bullet^{n_1}\oplus \bullet^{n_2} \oplus \cdots \oplus \bullet^{n_k}; 1,1,1,1,1) = 1 + \sum_{i=1}^k(2^{n_i}-1) + \sum_{1 \leq i < j \leq k} (2^{n_i}-1)(2^{n_j}-1),
\]
agreeing with the calculation given in Theorem 3.3 of \cite{elder2023toggling}.

\section{Posets Generated by $+$ and $\oplus$}

In this section we focus on families of posets that are generated by the disjoint union and ordinal sum operations.  Let $F$ be a set of non-empty finite posets such that:

\begin{enumerate}
\item each member of $F$ has a distinct convex set generating polynomial, 
\item the polynomials of members of $F$ do not factor into non-constant polynomials with constant coefficient $1$, and
\item no member of $F$ can be written as the union or ordinal sum of two or more non-empty posets.
\end{enumerate}

We then let $\mathcal{F}$ be the set of posets constructed by iteratively applying $+$ and $\oplus$ to members of $F$.  A well-known example of such an $\mathcal{F}$ is the class of \emph{Series-Parallel (SP)} posets.  This class of posets corresponds to $F = \{\bullet\}$, the singleton set containing the poset consisting of one element.  Equivalently, SP posets are exactly those which are $N$-free, that is, not containing a subposet isomorphic to the $N$ poset as shown in Figure \ref{fig:Nposet}.  SP posets are a well-studied family of posets: a generating series for the number of these posets was found in \cite{stanley1974enumeration}, and the asymptotics were computed in \cite{bayoumi1989asymptotic}.  Further below, we mention some results related to distinguishing SP posets.

\begin{figure}[h]
    \centering
    \input{figs/Nposet}
    \caption{The $N$ poset.}
    \label{fig:Nposet}
\end{figure}

As a consequence of condition 3, each element of $\mathcal{F}$ is uniquely described by its \emph{composition tree} (see, for example, \cite{schrader1997setup}), which is a rooted tree describing the structure of $+$ and $\oplus$ operations used to construct the poset.  
We give an example of a composition tree for a SP poset in Example \ref{ex:comp_tree}.

\begin{examp}
    Consider the SP poset $P$ shown in the left of Figure \ref{fig:ex_of_sp_tree}.  Since $P$ is a disjoint union of two posets that are themselves not a disjoint union, the root of the composition tree $T$ in the right of Figure \ref{fig:ex_of_sp_tree} is a vertex labelled $+$ with two neighbours.  The two-element component of $P$ is the ordinal sum of the single-element posets with vertices $a$ and $b$, so the neighbour of the root corresponding to this component is a vertex labelled $\oplus$ which has two leaves, labelled $a$ and $b$.  Similarly, the other component of $P$ is the ordinal sum of the antichain $\{c, d\}$ with $\{e\}$.  Note that the plane structure of the tree at each vertex corresponding to a $\oplus$ operation matters, since $\{c, d\} \oplus \{e\} \neq \{e\} \oplus \{c, d\}$.

    \begin{figure}[h]
        \centering
        \input{figs/SP_comp_tree}
        \caption{An SP poset and its composition tree.}
        \label{fig:ex_of_sp_tree}
    \end{figure}
    \label{ex:comp_tree}
\end{examp}

While our convex set generating polynomial does not distinguish all posets, in this section we prove that it does distinguish all posets in any family $\mathcal{F}$ satisfying conditions 1-3 above.  The question of distinguishing the family of SP posets has been investigated previously: in \cite{gordon1996series} it was shown that a strict subclass of SP posets are distinguishable via their \emph{Tutte polynomial}, while it was left undetermined whether the Tutte polynomial distinguishes all SP posets.  Recently, the result of \cite{gordon1996series} was extended: in \cite{luo2026tutte} a larger subclass $H$ of SP posets than the subclass in \cite{gordon1996series} was defined.  It was shown that if $P \in H$ and $Q$ is SP, then equality of the Tutte polynomials of $P$ and $Q$ implies $P \cong Q$.

Our proof that the convex set generating polynomial distinguishes posets in $\mathcal{F}$ will go as follows: first, we show that we can recover, from the polynomial of an ordinal sum of posets, the polynomial of each summand.  We then prove that the polynomial of an ordinal sum does not non-trivially factor.  So, given just the polynomial we can determine if a poset is a disjoint union or an ordinal sum and the polynomials of each factor of these operations.  Inductively, in this way we construct the composition tree of the poset, hence uniquely determining the poset.  The following lemma, which we state without proof, will be useful.

\begin{lm}
    (Lemma 5 of \cite{gordon1996series}) A poset $R$ is an ordinal sum of two non-empty posets if and only if there is exactly one downset of size $k$ in $R$ for some $0< k < |R|$.  For such a poset $R$, $R = I \oplus (R \setminus I)$ where $I$ is the unique downset of size $k$ in $R$.
    \label{lm:gordon_ideal}
\end{lm}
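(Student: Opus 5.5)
The plan is to prove both directions directly from the definitions of downset and ordinal sum. The converse direction will also produce the decomposition $R = I \oplus (R\setminus I)$, which gives the second sentence of the lemma.

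\emph{Forward direction.} Suppose $R = A \oplus B$ with $A, B$ non-empty, and set $k = |A|$, so $0<k<|R|$. First I note that $A$ is a downset of $R$, because nothing in $B$ lies below an element of $A$. Next let $D$ be any downset with $|D| = k$. If $D$ contained some $b \in B$, then, since every element of $A$ lies below $b$, we would get $A \subseteq D$. Together with $b \in D$ this gives $|D| \geq |A|+1 > k$, a contradiction. Hence $D \subseteq A$, and since $|D| = |A|$ we get $D = A$. So $A$ is the unique downset of size $k$.

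\emph{Converse.} Let $I$ be the unique downset of size $k$, where $0<k<|R|$. Both $I$ and $R\setminus I$ are then non-empty. The goal is to show that $u \leq v$ for every $u \in I$ and every $v \in R \setminus I$; this says exactly that $R = I \oplus (R\setminus I)$. I would argue by an exchange construction.
\begin{enumerate}
\item Take $v$ minimal in $R\setminus I$. Every element strictly below $v$ lies in $I$, so $I \cup \{v\}$ is a downset.
\item Suppose some maximal element $m$ of $I$ satisfies $m \not\leq v$. Then $m$ is maximal in $I\cup\{v\}$. Removing a maximal element from a downset leaves a downset, so $(I \cup \{v\})\setminus\{m\}$ is a downset. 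It has size $k$ and differs from $I$, which contradicts uniqueness. Therefore every maximal element of $I$ lies below $v$.
\item Every element of $I$ lies below some maximal element of $I$, since $R$ is finite. Hence all of $I$ lies below every minimal element of $R\setminus I$.
\item Every $w \in R\setminus I$ lies above some minimal element of $R \setminus I$. By transitivity, all of $I$ lies below $w$.
\end{enumerate}
This gives $R = I \oplus (R\setminus I)$.

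The only step that needs care is the exchange step~2. There one must check two things. First, $I\cup\{v\}$ is a downset, which is why $v$ is chosen minimal in $R \setminus I$. Second, deleting $m$ preserves downward closure; this holds because $m$ is not below $v$ and $m$ is maximal in $I$. The remaining steps follow routinely from finiteness and transitivity.
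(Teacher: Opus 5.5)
Your proof is correct and complete. The paper gives no proof to compare against: it quotes this lemma from Gordon (Lemma 5 of \cite{gordon1996series}) and explicitly states it without proof. Your argument therefore supplies a self-contained verification of something the paper takes on citation.

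\emph{Forward direction.} The key observation is that any downset meeting the upper summand must contain the entire lower summand. Such a downset therefore has size greater than $k$.

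\emph{Converse.} You use a one-element exchange: $I \mapsto (I\cup\{v\})\setminus\{m\}$, with $m$ maximal in $I$ and $v$ minimal in $R\setminus I$. You correctly identify and check the two points that make the exchanged set a downset. You also use $k>0$ and $k<|R|$ exactly where they are needed, namely for the existence of $m$ and of $v$.

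\emph{One point of polish.} When you say that ``$u\le v$ for all $u\in I$, $v\in R\setminus I$'' is exactly $R = I\oplus(R\setminus I)$, you could add one remark. No $v\in R\setminus I$ lies below any $u\in I$, by antisymmetry (or because $I$ is a downset). So the cross relations in $R$ are precisely those of the ordinal sum.
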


\begin{lm}
    Let $P = P_1 \oplus \cdots \oplus P_r$ be an ordinal sum of posets where each $P_i$ cannot be written as an ordinal sum of non-empty posets.  Then $\Phi(P ; s, t, x, y, z)$ determines $\Phi(P_i ; s, t, x, y, z)$ for all $i$.
    \label{lm:downset_gets_ordinals}
\end{lm}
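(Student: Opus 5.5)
We first recover the sizes $n_i=|P_i|$ from $\Phi(P;s,t,x,y,z)$, and then we peel off the summands one at a time using the case decomposition in the proof of Proposition~\ref{pp:ordinal_sum}.

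\textbf{Step 1 (the sizes).} From $\Phi(P;s,t,x,y,z)$ we read off $|P|$ and the downset generating polynomial
\[ D_P(x)=\Phi(P;1,1,xw,1,1/w)|_{w=0} \]
(Table~\ref{tab:other_gens_from_poly}). Write $N_i=n_1+\cdots+n_i$. Every downset of $P$ has the form $P_1\cup\cdots\cup P_{i-1}\cup D_i$, where $D_i$ is a downset of $P_i$; this follows by repeatedly applying the description of upsets and downsets of $P\oplus Q$ from the proof of Proposition~\ref{pp:ordinal_sum}.

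\begin{itemize}
\item For $k=N_i$ with $0<k<|P|$, this description gives exactly one downset of size $k$, namely $P_1\cup\cdots\cup P_i$.
\item Suppose instead $N_{i-1}<k<N_i$. A downset of size $k$ corresponds to a downset of $P_i$ of size $k-N_{i-1}$, which lies strictly between $0$ and $n_i$. Since $P_i$ is not an ordinal sum of non-empty posets, Lemma~\ref{lm:gordon_ideal} says $P_i$ has either zero or at least two downsets of that size. Hence $P$ has either zero or at least two downsets of size $k$.
\end{itemize}

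So the set of $k\in(0,|P|)$ with $[x^k]D_P(x)=1$ is exactly $\{N_1,\dots,N_{r-1}\}$. This determines $r$ and all the $n_i$. I expect this step to be the main point needing care: it relies on the indecomposability of each $P_i$ through Lemma~\ref{lm:gordon_ideal}.

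\textbf{Step 2 (peeling).} Write $P=P_1\oplus B$ with $B=P_2\oplus\cdots\oplus P_r$, and use the four-case decomposition of Proposition~\ref{pp:ordinal_sum}.

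A non-empty convex set $X$ has $\max(X)\subseteq P_1$ if and only if $X\subseteq P_1$. If instead some maximal element of $X$ lies in $B$, then $\Lambda(\max X)$ contains all of $P_1$ together with that element, so its size exceeds $n_1$. Therefore the terms of $\Phi(P)$ whose $x$-exponent lies in $\{1,\dots,n_1\}$ are exactly the terms of $y^{|B|}\bigl(\Phi(P_1;s,t,x,y,z)-1\bigr)$. This gives
\[ \Phi(P_1)=1+y^{-(|P|-n_1)}\cdot\bigl(\text{those terms of }\Phi(P)\bigr). \]

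Dually, the terms with $y$-exponent in $\{1,\dots,|B|\}$ are exactly the terms of $x^{n_1}\bigl(\Phi(B)-1\bigr)$. So $\Phi(B)$ is determined as well, and $B$ is again an ordinal sum of the indecomposable posets $P_2,\dots,P_r$.

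Induction on $r$ then yields every $\Phi(P_i)$; the base case $r=1$ is trivial. One could also extract each $\Phi(P_i)$ directly: it comes from the terms with $x$-exponent in $(N_{i-1},N_i]$ and $y$-exponent in $(|P|-N_i,\,|P|-N_{i-1}]$, divided by $x^{N_{i-1}}y^{|P|-N_i}$. The inductive peeling is cleaner to justify.
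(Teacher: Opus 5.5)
Your proof is correct. Step 1 matches the paper: read off the downset polynomial and use Lemma~\ref{lm:gordon_ideal} to see that the sizes $k\in(0,|P|)$ with a unique downset are exactly $N_1,\dots,N_{r-1}$. You also supply the short verification that the paper leaves to the citation.

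The two arguments differ in how they isolate the terms belonging to a single summand. For each $i$, the paper keeps all terms of $\Phi(P)$ whose $x$-exponent lies in $(N_{i-1},N_i]$ and shifts exponents by $x^{-N_{i-1}}y^{-(|P|-N_i)}$. For $i\ge 2$ that window alone also catches convex sets whose minimal elements lie in earlier summands. Already for the chain $\bullet\oplus\bullet$, the term $stx^2y^2z^2$ of the whole chain has $x$-exponent in $(1,2]$. It would shift to $stxy^2z^2$, which is not a term of $\Phi(\bullet)=1+stxyz$.

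Your peeling step avoids this problem. It only uses the $x$-window for the bottom summand, where $\max(X)\subseteq P_1$ really does force $X\subseteq\Lambda(\max X)\subseteq P_1$. The dual $y$-window then handles the upset $B$, and induction finishes. Your closing one-shot version is also correct, because it imposes both the $x$-window and the $y$-window $(|P|-N_i,|P|-N_{i-1}]$. The two upper bounds together give $X\subseteq\Lambda(\max X)\cap V(\min X)\subseteq P_i$. So your route keeps the paper's key ideas and repairs the one step where the paper's direct extraction is stated too loosely.
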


\begin{proof}
    Consider the specialization to downsets $\mathcal{D}(P ; x) = \Phi(P ; 1, 1, xw, 1, 1/w)|_{w=0}$.  Let $b_1 < \cdots < b_\ell < |P|$ be the exponents of the terms in $\mathcal{D}(P ; x)$ with coefficient $1$ (note that $|P|$ is obtained as the largest exponent of $x$).  By Lemma \ref{lm:gordon_ideal}, $\ell = r$ and $|P_i| = b_i - b_{i-1}$ for all $i$ ($b_0 = 0$).  Thus the non-constant terms of $\Phi(P_i ; s, t, x, y, z)$ correspond exactly to the terms of $\Phi(P ; s, t, x, y, z)$ that have $x$ exponent strictly larger than $b_{i-1}$ and at most $b_i$.  To be more precise, let $\alpha s^\beta t^\gamma x^b y^a z^c$ be a non-constant term of $\Phi(P ; s, t, x, y, z)$ with $b_{i-1} < b \leq b_i$.  Then $\alpha s^\beta t^\gamma x^{b - b_{i-1}} y^{a - |P| + b_{i}} z^c$ is a non-constant term of $\Phi(P_i ; s, t, x, y, z)$ and all non-constant terms of $\Phi(P_i ; s, t, x, y, z)$ appear in this way. 
\end{proof}

\noindent The next lemma, along with condition 2, establishes that the convex set generating polynomial of $P \in \mathcal{F}$ factors non-trivially if and only if $P$ is a disjoint union.

\begin{lm}
    Let $P = P_1 \oplus P_2$ for any non-empty posets $P_1, P_2 \in F$.  Then $\Phi(P ; s, t, x, y, z)$ does not factor into two non-constant polynomials each with constant coefficient 1.
    \label{lm:ord_sum_factor}
\end{lm}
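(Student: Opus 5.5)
The plan is a Newton polytope (degree) argument. Suppose $\Phi(P;s,t,x,y,z)=A\cdot B$ with $A=1+A'$ and $B=1+B'$, where $A',B'$ are nonzero polynomials with no constant term. Write $n=|P|$, $n_1=|P_1|$, $n_2=|P_2|$. Since $\mathbb{Z}[s,t,x,y,z]$ is an integral domain, the Newton polytope satisfies $N(\Phi)=N(A)+N(B)$ (Minkowski sum). Because $0$ lies in both $N(A)$ and $N(B)$, this gives $N(A),N(B)\subseteq N(\Phi)$.

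The polynomial $\Phi(P)$ has a distinguished extreme monomial: the term for $X=P$, namely $s^{|\min P_1|}t^{|\max P_2|}x^ny^nz^n$. It is the unique monomial of $z$-degree $n$, and its $x$- and $y$-exponents are also maximal. For any linear functional $f$ maximized on $N(\Phi)$ uniquely at this vertex $v$, we can write $v=p+q$, where $p$ and $q$ are vertices of $N(A)$ and $N(B)$ maximizing $f$, hence exponent vectors of genuine monomials of $A$ and $B$. The goal is to choose $f$ so that this decomposition is forced to be trivial, i.e.\ $p=0$ or $q=0$. Then, say, $q=0$ and $p=v$, and I would push the argument further to show $B'$ has no monomials at all.

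The structural input is the shape of the exponent set of $\Phi(P)-1$, read off from the proof of Proposition~\ref{pp:ordinal_sum}. It splits into three classes:
\begin{itemize}
\item terms from $X\subseteq P_1$, which have $y$-exponent $\ge n_2+1$ and $x$-exponent $\le n_1$;
\item terms from $X\subseteq P_2$, which have $x$-exponent $\ge n_1+1$ and $y$-exponent $\le n_2$;
\item mixed terms, which have $x$-exponent $\ge n_1+1$ and $y$-exponent $\ge n_2+1$.
\end{itemize}
In particular, no exponent vector lies in the box $\{x\le n_1,\ y\le n_2\}$ except $0$. I would take $f=\lambda_x e_x+\lambda_y e_y+\lambda_z e_z$ with positive weights (probably $x+y+z$, or $x+y$ plus a small multiple of $z$). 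The aim is to prove the inequality
\[
f(\omega(X))>\tfrac12 f(v)\quad\text{for every nonempty convex } X\subseteq P,
\]
i.e.\ every nonzero point of $N(\Phi)$ carries more than half the $f$-weight of $v$. Given this, if $p,q\neq 0$ then $f(p)+f(q)>f(v)$, a contradiction. So one factor, say $B$, has $q=0$ as its $f$-maximal vertex. Every nonzero monomial of $B'$ would then have $f$-weight at most $0$, which is impossible since all exponents are nonnegative and nonzero. Hence $B'=0$.

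For the inequality, mixed convex sets are easy: both $x$ and $y$ exponents exceed $n_1$ and $n_2$, and $|X|\ge2$. The hard case, which I expect to be the main obstacle, is $X$ contained in a single summand, say $X\subseteq P_1$. There $f(\omega(X))$ equals $|\Lambda(\max X)|+|V_{P_1}(\min X)|+n_2$ plus $z$-contributions. Elements of $P_1$ incomparable to all of $X$ are not counted, so the bound can fail for a single element of a wide $P_1$ (e.g.\ a singleton in a two-element antichain gives exactly $n$ for $f=x+y$). Here I would use the hypothesis $P_1,P_2\in F$, via condition 3 and Lemma~\ref{lm:gordon_ideal}, together with the $z$-weight to gain the missing amount.

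If the weighted inequality cannot be made strict uniformly, the fallback is a lexicographic refinement. First restrict to the face of $N(\Phi)$ where the inequality is tight. Then show the monomials lying on that face (essentially singletons and small sets inside one summand) cannot be split between $A'$ and $B'$, by comparing the three regions above: a product of a monomial from the $P_1$-region with one from the $P_2$-region lands strictly inside the mixed region with too-large total degree.
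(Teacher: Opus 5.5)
Your proposal has a genuine gap. The whole argument rests on the inequality $f(\omega(X))>\tfrac12 f(v)$ for all nonempty convex $X$, and that inequality is false in general. Neither condition 3 nor Lemma~\ref{lm:gordon_ideal} can rescue it: they exclude disconnected or ordinal-sum summands, but say nothing about how many elements a given element is comparable to.

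Concretely, take the $N$ poset with $a<c$, $b<c$, $b<d$. Its polynomial is irreducible in the required sense. Any factor's nonconstant terms must have all exponents at least $1$, and $s$- and $t$-degree $2$ then forces both factors to have the form $1+st\,g(x,y,z)$. Every term of the product would then have equal $s$- and $t$-exponents, whereas $\{a,b,c\}$ contributes $s^2tx^3y^4z^3$. Hence $F=\{\bullet,N\}$ satisfies conditions 1--3, and $P=N\oplus\bullet$ is covered by the lemma.

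Here $v=(2,1,5,5,5)$, while $X=\{d\}$ has $\omega(X)=(1,1,2,2,1)$. So $f(\omega(X))=2\lambda_x+2\lambda_y+\lambda_z<\tfrac52(\lambda_x+\lambda_y+\lambda_z)=\tfrac12 f(v)$ for every choice of positive weights. The $z$-weight cannot help, since singletons have $z$-exponent $1$. The lexicographic fallback is only a sketch and does not identify what would stop $v$ from splitting.

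There is also a second, independent gap. You would prove the bound only for exponent vectors of $\Phi$, but you apply it to $p$ and $q$. These are exponents of $A$ and $B$ and need not be exponents of $\Phi$, since cancellation can occur in $AB$. The version you state for \emph{every} nonzero point of $N(\Phi)$ is simply false, e.g.\ for $\tfrac12 v$.

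The missing idea is to use, instead of the top vertex, a monomial of $\Phi$ that cannot be a product of two nonconstant monomials. You already have the ingredients:
\begin{enumerate}
\item $N(A),N(B)\subseteq N(\Phi)$, and every nonzero point of $N(\Phi)$ has all coordinates positive. So every nonconstant monomial of $A$ or $B$ has every exponent at least $1$.
\item Take $w\in P_2$ and $X=\Lambda(w)=P_1\cup(\Lambda(w)\cap P_2)$. Its monomial has $t$-exponent $1$ and $y$-exponent $|P|$.
\item A product of two nonconstant monomials has $t$-exponent at least $2$. So the coefficient of this monomial in $AB$ is the sum of its coefficients in $A$ and in $B$, and it therefore occurs in one factor, say $A$.
\item Add to it any nonzero exponent vector of $B$, whose $y$-coordinate is at least $1$. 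This gives a point of $N(A)+N(B)=N(\Phi)$ with $y$-coordinate exceeding $|P|$, which is impossible.
\end{enumerate}
This is the paper's proof. It uses nothing about $P_1,P_2$ beyond non-emptiness.
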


\begin{proof}
    Toward contradiction, suppose $\Phi(P ; s, t, x, y, z) = GH$ for non-constant polynomials $G, H$ each with constant term one.  Recall the Newton polytope of $\Phi(P ; s, t, x, y, z)$:
    
    \[ \mathcal{N}_P = {\rm conv}\{(a_1, \ldots, a_5) \;:\; [s^{a_1} t^{a_2} x^{a_3} y^{a_4} z^{a_5}]\Phi(P ; s, t, x, y, z) \neq 0\}. \]

    \noindent Since $\Phi(P ; s, t, x, y, z)$ factors, $\mathcal{N}_P$ factors as the Minkowski sum of polytopes

    \[ \mathcal{N}_P = \mathcal{N}_G \oplus \mathcal{N}_H = \{ u + v \;:\; u \in \mathcal{N}_G, v \in \mathcal{N}_H \} \]

    \noindent where $\mathcal{N}_G$ is the Newton polytope of $G$ and similarly for $H$ \cite{ostrowski1975multiplication}.  Thus $(0, \ldots, 0) \in \mathcal{N}_G, \mathcal{N}_H$.  Since every non-constant term of $\Phi(P ; s, t, x, y, z)$ has an exponent of at least one in each variable, every point in $\mathcal{N}_P$ and thus each of $\mathcal{N}_G, \mathcal{N}_H$ except for the origin has positive value in each coordinate.  If, without loss of generality, there was a point $u \in \mathcal{N}_G$ with both a non-zero and zero coordinate, then $u = u + 0 \in \mathcal{N}_P$, a contradiction.  Therefore each non-constant term of $G$ and $H$ has an exponent of at least one in each variable.

    Consider a term $\alpha s^a t x^{|P_1|+c} y^{|P|} z^{|P_1|+c}$ of $\Phi(P ; s, t, x, y, z)$ corresponding to the convex set $X = V(A) \cap \Lambda(w)$ where $A$ is the antichain of minimal elements of $P_1$ and $w \in P_2$.  Thus $|A| = a$ and $c > 0$.  Since $t$ has an exponent of one, by the above observation either this term is a term of $G$ or a term of $H$.  Without loss of generality, say it is a term of $G$.  Then $(a, 1, |P_1|+c, |P|, |P_1|+c) \in \mathcal{N}_G$.  But taking any non-origin point $v \in \mathcal{N}_H$, the fourth coordinate of $v$ is positive and hence $(a, 1, |P_1|+c, |P|, |P_1|+c) + v$ has a fourth coordinate strictly greater than $|P|$.  This is a contradiction, since any point in $\mathcal{N}_P$ has fourth coordinate at most $|P|$.
\end{proof}

Now we have enough information to inductively construct the composition tree of a poset in $\mathcal{F}$ from its polynomial.

\begin{tm}
    Let $\mathcal{F}$ be a family of posets where $F$ satisfies conditions 1-3.  If $P, Q \in \mathcal{F}$ and $\Phi(P; s, t, x, y, z) = \Phi(Q; s, t, x, y, z)$, then $P \cong Q$.
    \label{tm:F_dist}
\end{tm}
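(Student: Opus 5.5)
Induct on $|P|$ (equivalently, on the size of the composition tree), showing that $\Phi(P;s,t,x,y,z)$ determines the composition tree of $P$ and hence $P$ up to isomorphism. Each $P \in \mathcal{F}$ falls into exactly one of three types: (a) a member of $F$; (b) a disjoint union $P_1 + \cdots + P_m$, $m \geq 2$, of connected members of $\mathcal{F}$; (c) an ordinal sum $P_1 \oplus \cdots \oplus P_r$, $r \geq 2$, of $\oplus$-indecomposable members of $\mathcal{F}$. By condition 3, members of $F$ are neither unions nor ordinal sums, so the types are disjoint. The plan is to read off the type from the polynomial, then recover the polynomials of the pieces and apply induction.

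\emph{Ordinal sums.} By Lemma \ref{lm:gordon_ideal}, $P$ is an ordinal sum if and only if the downset polynomial $\Phi(P;1,1,xw,1,1/w)|_{w=0}$ has a coefficient $1$ at some exponent $0<k<|P|$. This is visible from $\Phi$, so type (c) is detected. In that case Lemma \ref{lm:downset_gets_ordinals} gives the polynomials $\Phi(P_i)$ together with their order. Each $P_i$ is a smaller member of $\mathcal{F}$, so by induction the $\Phi(P_i)$ determine the $P_i$, and $P \cong P_1 \oplus \cdots \oplus P_r$ is determined.

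\emph{Disjoint unions versus $F$.} If $P$ is not an ordinal sum, it is either in $F$ or a union of connected pieces. By Proposition \ref{prop:union}, in the union case $\Phi(P) = \prod \Phi(P_j)$, where each factor has constant term $1$ and is non-constant. The key claim is that the polynomial of every connected $P' \in \mathcal{F}$ is irreducible in the sense of admitting no factorization into non-constant polynomials with constant term $1$. For $P' \in F$ this is condition 2. For a connected $P'$ that is an ordinal sum, I extend the Newton polytope argument of Lemma \ref{lm:ord_sum_factor}. That argument never really used $P_1, P_2 \in F$. It used only that $P' = P_1 \oplus P_2$ with both summands non-empty, so that there is a convex set whose term has $t$-exponent $1$ and $y$-exponent $|P'|$ (take $\min(P_1)$ and a single element of $P_2$). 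Any splitting of $P'$ as a two-part ordinal sum works, so the lemma applies to all ordinal sums.

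Given this, the factorization of $\Phi(P)$ into such irreducible factors is unique up to order, because the polynomial ring is a UFD and the normalization ``constant term $1$'' fixes units. Hence $\Phi(P)$ is irreducible exactly when $P$ is connected. If it is reducible, $P$ is a union, and the irreducible factors are exactly the $\Phi(P_j)$ of the components. Induction recovers each $P_j$, and therefore $P$.

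If $\Phi(P)$ is irreducible and $P$ is not an ordinal sum, then $P \in F$. Condition 1 then says $\Phi$ determines which member of $F$ it is. One subtlety remains: $Q$ could be of a different type from $P$ with the same polynomial. The detection criteria are intrinsic to $\Phi$, however, so $P$ and $Q$ have the same type.

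The main obstacle is the irreducibility step. One must check carefully that Lemma \ref{lm:ord_sum_factor} holds for arbitrary non-empty $P_1, P_2 \in \mathcal{F}$, not only members of $F$. One must also confirm that uniqueness of factorization into irreducibles with constant term $1$ matches the components exactly, which requires that no component's polynomial is itself a product of such factors. For ordinal sums this follows from the extended lemma, and for members of $F$ it is condition 2.
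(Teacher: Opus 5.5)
Your proposal is correct and follows essentially the same route as the paper: you split off the components by unique factorization (Proposition \ref{prop:union}, Lemma \ref{lm:ord_sum_factor}, condition 2), recover the ordinal summands' polynomials via Lemma \ref{lm:downset_gets_ordinals}, and induct. You are more careful than the paper on two points it glosses over. First, Lemma \ref{lm:ord_sum_factor} is stated only for $P_1,P_2\in F$, but its Newton polytope argument needs only non-empty summands. Second, the type of $P$ (member of $F$, disjoint union, or ordinal sum) is itself determined by $\Phi$, through the factorization and through the downset polynomial together with Lemma \ref{lm:gordon_ideal}.
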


\begin{proof}
    First we show how to construct $P$ from $\Phi(P ; s, t, x, y, z)$.  The proof is by induction on the depth of the decomposition tree of $P$ as an element of $\mathcal{F}$.  If the decomposition tree of $P$ is a single leaf, then $P \in F$ and the result holds by condition 1.  So, suppose $P \not \in F$ and that the result holds for posets with shallower decomposition trees than $P$.  Let $\Phi(P ; s, t, x, y, z) = F_1 F_2 \cdots F_r$ be the unique minimal factorization of $\Phi(P ; s, t, x, y, z)$ scaled so all factors have constant coefficient 1.  From Lemma \ref{lm:ord_sum_factor}, we know that the polynomials of ordinal sums do not factor; it follows from this and condition 2 that $P$ has exactly $r$ components $P_1, \ldots, P_r$, i.e.\ $P = P_1 + \cdots + P_r$. From Proposition \ref{prop:union}, we have $F_i = \Phi(P_i ; s, t, x, y, z)$ (after some labelling of the $P_1, \ldots, P_r$).  For each $i$, since $P_i$ is not a disjoint union of smaller, non-empty posets, it is an ordinal sum.  So $P_i = Q_{i, 1} \oplus \cdots \oplus Q_{i, \ell}$, where each $Q_{i, j}$, $1 \leq j \leq \ell$, is not an ordinal sum of smaller, non-empty posets.  By Lemma \ref{lm:downset_gets_ordinals}, we can recover $\Phi(Q_{i, j} ; s, t, x, y, z)$ for each $j$ from $\Phi(P_i ; s, t, x, y, z)$.  By induction, we can uniquely determine each $Q_{i, j}$, and hence $P$.

    It immediately follows that if $\Phi(P; s, t, x, y, z) = \Phi(Q; s, t, x, y, z)$, then $P \cong Q$.
\end{proof}

In particular, we can distinguish SP posets from their convex set generating polynomials.

\begin{cor}
    Let $P$ and $Q$ be SP posets.  If $\Phi(P; s, t, x, y, z) = \Phi(Q; s, t, x, y, z)$, then $P \cong Q$.
\end{cor}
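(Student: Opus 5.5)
The plan is to derive this as a direct specialization of Theorem~\ref{tm:F_dist}, taking $F = \{\bullet\}$. By the discussion at the start of this section, the SP posets are exactly the posets obtained by iterating $+$ and $\oplus$ starting from the one-element poset. So the class of SP posets is the family $\mathcal{F}$ built from $F=\{\bullet\}$, and the only work is to check that this $F$ satisfies conditions 1--3.

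First, condition 1 holds trivially, since $F$ contains just one poset. Second, condition 3 holds because $\bullet$ has a single element. Any union or ordinal sum of two or more non-empty posets has at least two elements, so $\bullet$ cannot be written this way.

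Third, for condition 2 we use Example~\ref{eg single elt}, which gives $\Phi(\bullet;s,t,x,y,z)=1+stxyz$. We must rule out a factorization into two non-constant polynomials $G$ and $H$, each with constant term $1$. Comparing total degrees gives $\deg G+\deg H=5$ with both degrees positive. The Newton polytope argument from the proof of Lemma~\ref{lm:ord_sum_factor} does this cleanly. The Newton polytope of $1+stxyz$ is the segment from the origin to $(1,1,1,1,1)$. A Minkowski decomposition of it into two polytopes both containing the origin must consist of two segments, $[0,\lambda(1,\dots,1)]$ and $[0,(1-\lambda)(1,\dots,1)]$. Lattice points force $\lambda\in\{0,1\}$, which makes one of $G$, $H$ constant.

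Alternatively, one can argue directly: $1+stxyz$ has degree one in $s$, so the $s$-dependence lies entirely in one factor, say $G$. Then $H$ is free of $s$, and writing $G=G_0+sG_1$ gives $G_0H=1$ and $G_1H=txyz$. From $G_0H=1$, $H$ is a constant, so $H=1$, which is a contradiction.

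With conditions 1--3 verified, Theorem~\ref{tm:F_dist} gives $P\cong Q$ whenever $\Phi(P)=\Phi(Q)$ for SP posets $P,Q$. I do not expect any real obstacle here. The only step needing any argument is the irreducibility in condition 2, and the degree-in-$s$ argument settles it in one line.
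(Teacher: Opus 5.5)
Your proposal is correct and takes the same approach as the paper, which obtains the corollary directly from Theorem~\ref{tm:F_dist} by taking $F=\{\bullet\}$. Your explicit check of conditions 1--3, in particular the irreducibility of $1+stxyz$ via the degree-in-$s$ argument, supplies a verification that the paper leaves implicit.
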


While $\Phi(\cdot ; s, t, x, y, z)$ distinguishes all posets in $\mathcal{F}$, it remains open whether any of its specializations will do the same, even for SP posets.  To probe exactly how much information contained in the convex set generating polynomial is needed to distinguish SP posets, we investigated the SP-distinguishing power of some of the specializations of $\Phi(\cdot ; s, t, x, y, z)$ given in Table \ref{tab:other_gens_from_poly}.  Computations reveal that the polynomial of the first pf-invariant $[z]\Phi(\cdot ; 1, 1, x, y, z)$ does not distinguish a pair of SP posets with seven elements (nor the pair of dual posets), see Figure \ref{fig:sp_opt4_nondis}.  The ordinary interval polynomial $[st]\Phi(\cdot ; s, t, x, y, z)$ does not distinguish a pair of SP posets with ten elements, see Figure \ref{fig:sp_pos_ord_poly_nondis}.  The downset polynomial $\Phi(\cdot ; 1, 1, xw, 1, 1/w)|_{w=0}$ (and dually the upset polynomial) does not distinguish the pair of SP posets with five elements shown in Figure \ref{fig:downset_counter}.  The specializations $\Phi(\cdot ; 1, 1, x, y, z)$, $\Phi(\cdot ; s, t, 1, 1, z)$, $\Phi(\cdot ; 1, t, xw, 1, 1/w)|_{w=0}$ (dually $\Phi(\cdot ; s, 1, 1, yw, 1/w)|_{w=0}$), and $\Phi(\cdot ; 1/w, 1, x, y, w)|_{w=0}$ (dually $\Phi(\cdot ; 1, 1/w, x, y, w)|_{w=0}$) distinguish all SP posets with ten elements or fewer, but it remains inconclusive whether these specialization continue to distinguish SP posets of larger cardinality.  We remark that Lemma \ref{lm:downset_gets_ordinals} holds for any specialization of $\Phi(\cdot ; s, t, x, y, z)$ that specializes further to the downset polynomial $\mathcal{D}(\cdot ; x)$.  Thus Lemma \ref{lm:downset_gets_ordinals} holds for all of the above specializations except potentially $\Phi(\cdot; s, t, 1, 1, z)$.

\begin{figure}[ht]
    \centering
    \input{figs/pf_not_dist}
    \caption{Two SP posets that are not distinguished by the polynomial form of the first pf-invariant $[z]\Phi(\cdot ; 1, 1, x, y, z)$.  The pair of dual posets is also not distinguished by this specialization.}
    \label{fig:sp_opt4_nondis}
\end{figure}

\begin{figure}[ht]
    \centering
    \input{figs/int_not_dist}
    \caption{Two SP posets that are not distinguished by the ordinary interval polynomial $[st]\Phi(\cdot ; s, t, x, y, z)$.}
    \label{fig:sp_pos_ord_poly_nondis}
\end{figure}

\begin{figure}[ht]
    \centering
    \input{figs/downset_counter}
    \caption{Two SP posets that are not distinguished by the downset polynomial $\Phi(\cdot ; 1, 1, xw, 1, 1/w)|_{w=0}$.}
    \label{fig:downset_counter}
\end{figure}

\section{Applications to Causal Sets}

We now turn to how convex sets of posets find use in the causal set approach to quantum gravity.  This was the motivating problem that led us to define our convex set generating polynomial.  We will give a brief introduction to Causal Set Theory, discuss the previous results which motivated the present work, and discuss the utility  of convex sets in Causal Set Theory.  For a more complete and in-depth account of Causal Set Theory and its development, see, for example, \cite{surya2019causal}.

\subsection{Causal Set Theory}

Causal Set Theory (CST) is a theory of quantum gravity in which spacetime, which would more usually be a Lorentzian manifold, is taken to be a locally-finite poset, or a \emph{causal set}.  The smooth manifolds of General Relativity, therefore, are continuous approximations of this discrete structure.  The causal order of the spacetime, which is determined by the metric, is represented by the partial order relation of the causal set, and the volume of a region in the spacetime is represented by the cardinality of a subset of the causal set.  The condition of local-finiteness ensures that regions of spacetime that have finite volume correspond to finite cardinality subsets of the causal set.

The central claim of CST is that the causal order, along with a notion of volume, captures all the information about a spacetime.  An open conjecture, called the \emph{Hauptvermutung}, claims that a continuous spacetime that approximates a causal set is unique up to a notion of approximate isometry.  This conjecture remains open largely due to the difficulty of rigorously defining this notion of approximate isometry.

Not all causal sets resemble an actual spacetime manifold.  Famously, the set of finite causal sets (finite posets) is asymptotically dominated by those with three layers \cite{kleitman1975asymptotic} as the causal set cardinality tends to infinity.  Hence a causal set taken at random will not resemble a spacetime manifold, i.e. will not be \emph{manifold-like}.  Thus CST researchers have developed alternative techniques for constructing manifold-like causal sets.  One such technique which will be relevant for what follows is called \emph{sprinkling}.  Here, points are chosen from an underlying spacetime manifold according to a random Poisson point process.  This process ensures that, on average, the resulting causal set is invariant under Lorentz transformations (which is not true of regular grids embedded into a spacetime).  While sprinkling produces manifold-like causal sets, one drawback is that it assumes a pre-existing spacetime.  Thus if one wishes to study the dynamics of causal sets, a different approach is needed.

A basic problem and pillar of CST research is the determination of physical properties of a causal set.  That is, extracting the analogues of physical characteristics of a spacetime just from the partial order of the causal set.  We have already seen above that the volume of a spacetime corresponds to subset cardinality, but there are plenty more properties to be desired, such as: geodesics between spacetime events, Minkowski dimension, and Ricci curvature.  Determining Minkowski dimension is of particular interest to us, as discussed below.  In pure poset language the Minkowsi dimension corresponds to dimension as a sphere order \cite{trotter2002combinatorics}.

\subsection{Motivating Work}
\label{sec:cst_motivating}

Given a causal set, its \emph{Minkowski dimension} is the smallest dimension of Minkowski space that it can be embedded into such that all relations and non-relations are preserved.  This notion of dimension differs from the more combinatorial \emph{order dimension}, except in the two-dimensional case when these definitions agree \cite{meyer1988dimension}.  In the present literature there exist numerous methods for estimating this dimension (see, for example, \cite{surya2019causal}, \cite{aghili2019discrete}).  Here, we will focus on one method involving the distribution of interval sizes.  This work was the original motivation for our investigations into convex sets.

In \cite{glaser2013towards}, the authors determined the asymptotic behaviour of the distribution of interval sizes of a causal set $C$ sprinkled into a causal diamond of $d$-dimensional Minkowski space.  Let $N_k^d$ be the number of intervals of cardinality $k+2$ in such an $n$-element sprinkling.  The authors determined that for $k \geq 0$,

\[ s_k^d = \lim_{n \to \infty} \frac{N_k^d}{N_0^d} = \frac{\Gamma(\frac{2}{d}+k)}{\Gamma(\frac{2}{k}) \Gamma(k+1)} = \binom{\frac{2}{d}+k-1}{k}. \]

\noindent Thus if we take the generating series of these quantities, we obtain

\[ \quad S_d(z) \equiv \sum_{k \geq 0} s_k^d z^k = \frac{1}{(1-z)^{\frac{2}{d}}} \]

\noindent which packs the Minkowski dimension into an elegant closed expression.  Note that $[z]S_d(z) = s_1^d = \frac{2}{d}$, which is the limit of the proportion of $3$-element intervals divided by the number of $2$-element intervals.

This method, however, does have drawbacks.  First, the convergence of the values $s_k^d$ is slow \cite{aghili2019discrete}.  Second, it cannot capture some additional information such as temporal symmetry, or lack thereof, for a causal set and its dual (time-reversal) have the same distribution of interval sizes.

The same interval size distributions were used in \cite{surya2026closeness} to define a \emph{closeness} function on causal sets.  In short, the closeness between causal sets $C$ and $C'$ (these causal sets need not be constructed from sprinkling, so we drop the superscript $d$) was defined to be

\[  \mathcal{D}_n(C, C') = \sum_k |N_k - N_k'| \]

\noindent (alternatively, this $L^1$-norm could be replaced with any $L^r$-norm).  Empirical evidence, \cite{surya2026closeness}, showed that this closeness function detects when causal sets have the same Minkowski dimension and global topology.  However, it suffers from the same issue of temporal symmetry as above.

\subsection{Convex Sets in Causal Sets}

The results of the methods discussed in Section \ref{sec:cst_motivating} led us to wonder how convex sets, being generalizations of intervals, could extend these results, and possibly overcome some of the issues they faced.  Let us first note alternative views of convex sets that may suggest reasons to be interested in convex sets from a physical point of view.  A simple translation of the definition shows that convex sets are those sets which which contain every possible path between any pair of elements in the set.  In a continuous spacetime, this means a set for which any time-like curve between a pair of points in the set is also contained in the set.  An equivalent definition of a convex set is: a set $X \subseteq P$ is convex if for any $u \in P$, $u \not \in X$, at most one of $X \cap V(u)$ and $\Lambda(u)$ is non-empty.  Thus to some outside observer, a convex set cannot simultaneously lie to their future and past.

To our knowledge, a general treatment of convex sets of causal sets does not appear in the literature.  Some consideration of convex sets appeared in a variation of \emph{covtree}, which captures the dynamics of classical causal set growth models \cite{zalel2024covariant}.  A special kind of convex set, called a \emph{thickened antichain}, was shown to be associated to covariant observables of the same growth models in \cite{major2006spatial}.  A special case of convex sets, called \emph{causally closed} sets, appears in \cite{casini2002logic}.  Here, an orthomodular lattice of causally closed sets of a spacetime is constructed and analyzed.  Furthermore, convex sets of spacetime appear in the axioms of Algebraic Quantum Field Theory (AQFT), being those sets for which algebras of observables are attached \cite{fewster2020algebraic}.

The geometry of special kinds of convex sets has been previously studied.  There is a well-known connection between the volume of causal diamonds and their geometry.  Expanding the volume of a causal diamond in a curved spacetime with respect to the proper time between the endpoints of the diamond gives a series whose coefficients involve the curvature at the centre of the diamond \cite{myrheim1978statistical, gibbons2007geometry}.  A causal set analogue of this derivation was done in \cite{roy2013discrete}.  Using the expected abundance of chains of each length in a causal set sprinkled into a causal diamond of a curved spacetime, the authors found expressions for the curvature terms.  Generalizing the previous work to causal cones, the author of \cite{jubb2017geometry} expanded the volume of a causal cone (a region bounded by the lightcone originating from a point and a spatial hypersurface) in a curved spacetime and found expressions for intrinsic and extrinsic curvature terms.  Applying these expressions to sprinkled causal sets gave expressions relating geometric quantities to the expected number of elements with a given cardinality of its past (similarly future).

In the remainder of this section we describe some preliminary work toward extending the results of Section \ref{sec:cst_motivating} to convex sets.  Given a (finite) causal set $C$, let $X_{i, j, k}$ be the number of convex sets $X$ of $C$ with $|\min(X)| = i, |\max(X)| = j$, and $|X| = k$.  That is, $X_{i, j, x} = [x^j y^i z^k]\Phi(C ; s, t, 1, 1, z)$.  First, we deal with the problem of temporal symmetry.  For causal sets sprinkled into spacetime regions that are not temporally symmetric, we expect the off-diagonal distributions of $X_{i, j, k}$, where $i \neq j$, to detect this lack of symmetry.  This expectation is supported by results from small causal sets, shown in Figure \ref{fig:expconD=2}.  Here we have sprinkled $50$ points into a region of $2$-dimensional Minkowski space and computed the distributions of convex set sizes for $(i, j) = (1, 1), (1, 2), (2, 1)$.  The distributions are averaged over $50$ simulations.  The region of sprinkling for Figure \ref{fig:expD=2} was the lower half of a causal diamond centred at the origin, and the region for Figure \ref{fig:conD=2} was the upper half of the same diamond.  We see a clear difference in the distributions for $(i, j) = (1, 2)$ and $(i, j) = (2, 1)$ in each region, and furthermore they agree with our expectation that the causal sets sprinkled into the lower region should have more $(i, j) = (1, 2)$ convex sets than $(i, j) = (2, 1)$ convex sets and vice versa.  We also clearly see that intervals, the convex sets where $(i, j) = (1, 1)$, do not distinguish the sprinklings in the different regions.

\begin{figure}[h]
    \centering
    \begin{subfigure}{0.48\linewidth}
        \centering
        \includegraphics[scale=0.5]{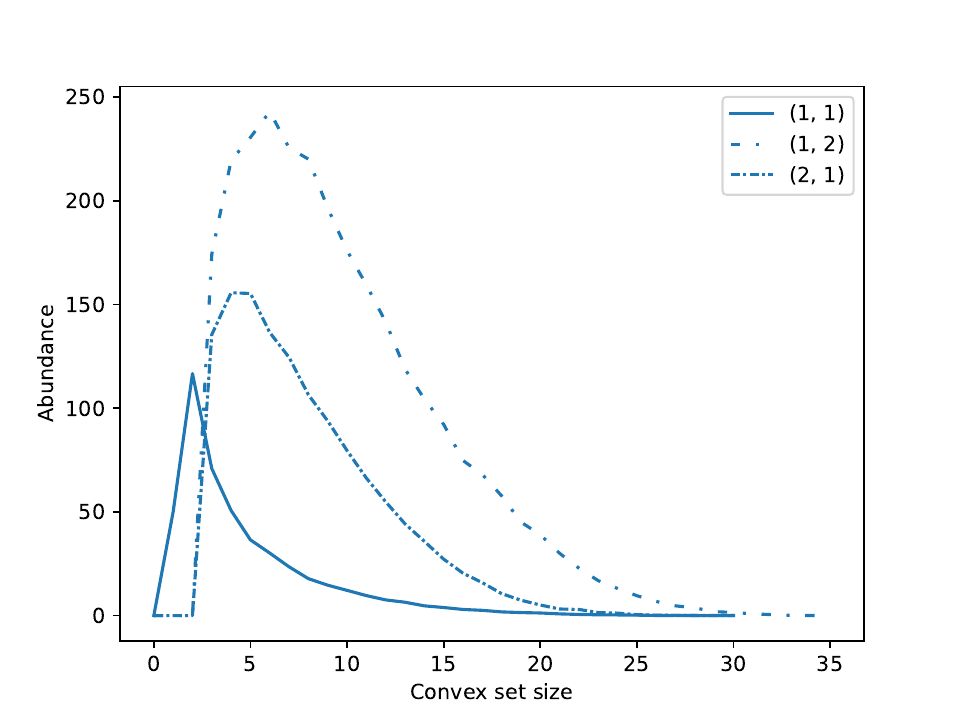}
        \caption{}
        \label{fig:expD=2}
    \end{subfigure}
    \begin{subfigure}{0.48\linewidth}
        \centering
        \includegraphics[scale=0.5]{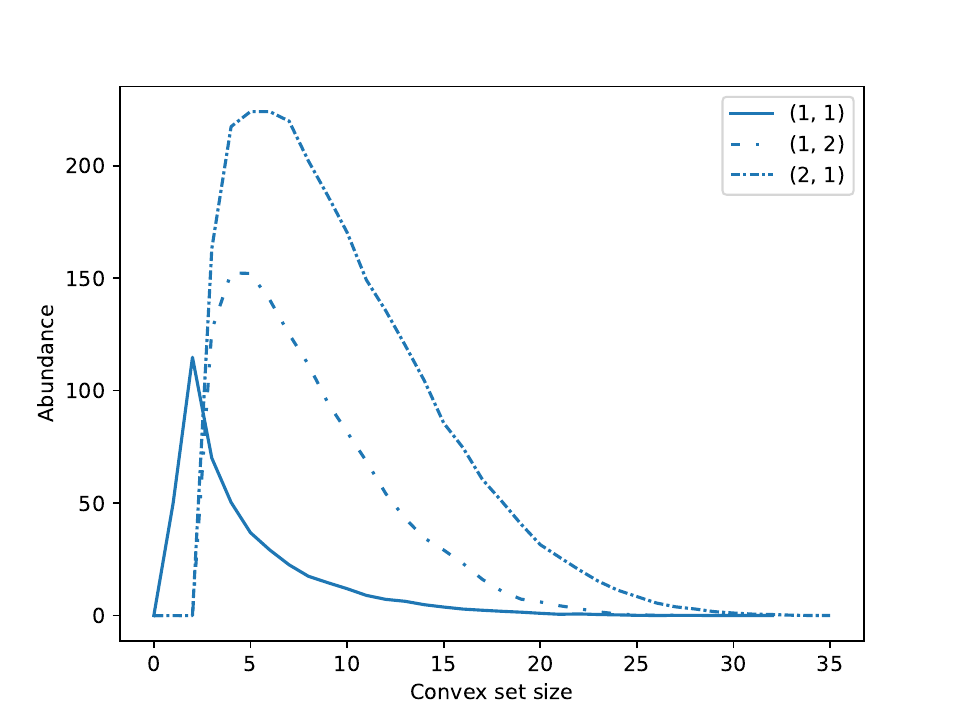}
        \caption{}
        \label{fig:conD=2}
    \end{subfigure}
    \caption{Convex set distributions for causal sets sprinkled into temporally-asymmetric regions of $2$-dimensional Minkowski spacetime.  $X_{i, j, k}$ is plotted with $k$ along the horizontal axes for fixed pairs $(i, j)$ as shown in the legends.  In (a) the region of sprinkling is the lower half of a causal diamond centred on the origin, while in (b) the region is the upper half of the diamond.  In both plots, sprinklings had $50$ elements and distributions were averaged over $50$ sprinklings.}
    \label{fig:expconD=2}
\end{figure}

Our polynomial can also be used to define a closeness function on causal sets that extends what was given in \cite{surya2026closeness}.  Given two causal sets $C$ and $C'$, we can define

\[ \mathcal{D}_{convex}(C, C') = |\Phi(C ; s, t, x, y, z) - \Phi(C' ; s, t, x, y, z)| \]

\noindent where by the absolute value of a polynomial, we mean taking the absolute value of its coefficients.  We can then evaluate $\mathcal{D}_{convex}(C, C')$ at certain values of the parameters to obtain numerical values.  Thus far we do not have results involving sprinkled causal sets.  However, we have observed an interesting phenomenon about $\mathcal{D}_{convex}$ for small posets.  In Figure \ref{fig:cls} we plot histograms showing the closeness values between posets with seven elements.  More specifically, given posets $P, P'$ with $|P| = |P'| = 7$, we compute the value $\mathcal{D}_{convex}(P, P')|_{s=t=x=y=z=1}$.  The top histogram is the histogram of these values for all such pairs of $P$ and $P'$.  In the bottom plot we only consider terms of $\Phi(\cdot ; s, t, x, y, z)$ corresponding to intervals, i.e. the polynomial $[st]\Phi(\cdot ; s, t, x, y, z)$.  Thus the closeness function in this case is

\[ \mathcal{D}_{interval}(P, P') = |[st]\Phi(P ; s, t, x, y, z) - [st]\Phi(P' ; s, t, x, y, z)| \]

\noindent and we further evaluate $\mathcal{D}_{interval}(P, P')|_{x=y=z=1}$.  What we observe is a comb-like pattern to the histogram for the $\mathcal{D}_{convex}$ plot, where there are seemingly two distributions of differing means and amplitudes that interlace each other.  This behaviour is not observed for $\mathcal{D}_{interval}$.  As of yet we do not have an explanation for this phenomenon, so it remains a combinatorial oddity.

\begin{figure}[h]
    \centering
    \includegraphics[scale=0.6]{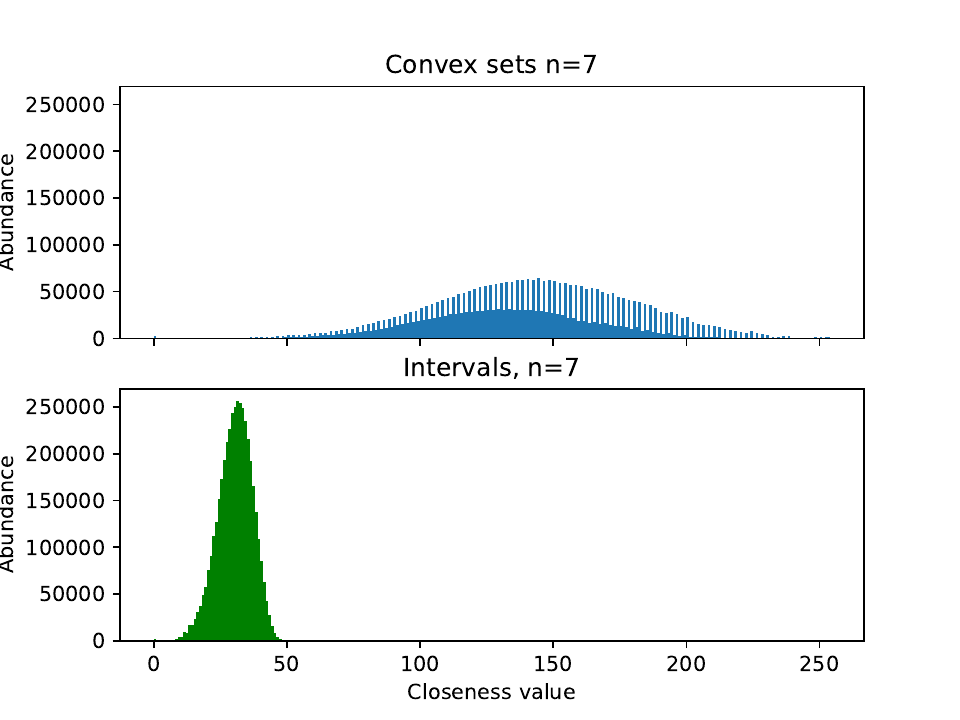}
    \caption{Histograms showing the occurrences of closeness values between posets with seven elements.  The top uses all terms of $\Phi(\cdot ; s, t, x, y, z)$, and sets $s=t=x=y=z=1$ after absolute values have been applied.  The bottom uses the terms corresponding to intervals, i.e. $[st]\Phi(\cdot ; s, t, x, y, z)$, and similarly sets $x=y=z=1$ after absolute values are applied.}
    \label{fig:cls}
\end{figure}

While convex sets seem to hold important physical information of a causal set, we must note a difficulty in gathering numerical evidence to support this.  Computing the polynomial $\Phi(P ; s, t, x, y, z)$ is computationally time-consuming due to how many convex sets $P$ has.  Compare this to only computing the terms corresponding to intervals, of which there are at most $\sim n^2/2$ for a poset with $n$ elements.  Thus it is not realistic to compute the full polynomial for large posets.  However, to have confidence in numerical results involving sprinkled causal sets, the causal sets are ideally large ($\geq 1000$ elements).  Despite this difficulty, we offer two possible ways forward: $1)$ restrict to very small or very large convex sets (relative to the size of entire causal set), of which there are few, or $2)$ sample a small subset of convex sets.  It is possible that in option $1)$, the small and large convex sets will struggle to capture information that those with intermediate size may.

We will be pursuing these and related ideas in upcoming work with Sumati Surya.


\section*{AI Declaration}

The only AI used in the preparation of this paper was Google's AI Overview, which pointed the authors to the notion of the Newton polytope.  All observations, proofs, code-writing, exposition, and literature searches were performed by the authors.

\section*{Acknowledgements}

The authors thank Sumati Surya for useful conversations, Kevin Purbhoo for suggesting there may not be a cartesian product formula for the convex set generating polynomial, and Adrian Segovia for pointing us to the terminology of a convex set.  The authors are grateful to Rafael Sorkin and the discussions we were able to have had with him regarding the pf-invariant(s).

\bibliographystyle{plain}
\bibliography{cst_refs}

\appendix

\newpage
\section*{Appendix A: Undistinguished Posets with Seven Elements}
\label{app:A}

Here we list the 16 pairs of posets with seven elements that are not distinguished by the convex set generating polynomial.

\begin{table}[h]
    \centering
    \begin{tabular}{|c||c|}
        \hline
        \input{figs/p1} & \input{figs/p9} \\
        \hline
        \input{figs/p2} & \input{figs/p10} \\
        \hline
        \input{figs/p3} & \input{figs/p11} \\
        \hline
        \input{figs/p4} & \input{figs/p12} \\
        \hline
    \end{tabular}
    \caption{Eight pairs of posets not distinguished by the polynomial.}
    \label{tab:app_16pairs_pt1}
\end{table}

\begin{table}[h]
    \centering
    \begin{tabular}{|c||c|}
        \hline
        \input{figs/p5} & \input{figs/p13} \\
        \hline
        \input{figs/p6} & \input{figs/p14} \\
        \hline
        \input{figs/p7} & \input{figs/p15} \\
        \hline
        \input{figs/p8} & \input{figs/p16} \\
        \hline
    \end{tabular}
    \caption{An additional eight pairs of posets not distinguished by the polynomial.}
    \label{tab:app_16pairs_pt2}
\end{table}

\end{document}

%% file: figs/poly_example.tex
\begin{tikzpicture}[scale=1.2]
\tikzset{vertex/.style={circle, inner sep=0pt, outer sep=0pt, minimum width=0.25cm,fill=black},
rvertex/.style = {vertex, fill=red},
bvertex/.style = {vertex, fill=blue},
gvertex/.style = {vertex, fill=green},
nedge/.style = {-}}

\node[vertex, label={[label distance=0.1cm]270:$a$}] (A) at (0, 0) {};
\node[vertex, label={[label distance=0.1cm]180:$b$}] (B) at (-0.5, 1) {};
\node[vertex, label={[label distance=0.1cm]0:$c$}] (C) at (0.5, 1) {};
\node[vertex, label={[label distance=0.1cm]90:$d$}] (D) at (0, 2) {};

\path [-] (A) edge node {} (B);
\path [-] (A) edge node {} (C);
\path [-] (B) edge node {} (D);
\path [-] (C) edge node {} (D);


\end{tikzpicture}

%% file: figs/6elem_not_dist.tex
\begin{tikzpicture}[scale=0.9]
\tikzset{vertex/.style={circle, inner sep=0pt, outer sep=0pt, minimum width=0.25cm,fill=black},
rvertex/.style = {vertex, fill=red},
bvertex/.style = {vertex, fill=blue},
gvertex/.style = {vertex, fill=green},
nedge/.style = {-}}

\node[vertex] (a) at (0, 0) {};
\node[vertex] (b) at (2, 0) {};
\node[vertex] (c) at (0, 2) {};
\node[vertex] (d) at (2, 2) {};
\node[vertex] (e) at (0, 4) {};
\node[vertex] (f) at (2, 4) {};

\path [-] (a) edge node {} (c);
\path [-] (c) edge node {} (b);
\path [-] (b) edge node {} (d);
\path [-] (c) edge node {} (e);
\path [-] (e) edge node {} (d);
\path [-] (d) edge node {} (f);


\node[vertex] (1) at (4, 0) {};
\node[vertex] (2) at (6, 0) {};
\node[vertex] (3) at (4, 2) {};
\node[vertex] (4) at (6, 2) {};
\node[vertex] (5) at (5, 3) {};
\node[vertex] (6) at (5, 4) {};

\path [-] (1) edge node {} (3);
\path [-] (3) edge node {} (2);
\path [-] (2) edge node {} (4);
\path [-] (4) edge node {} (5);
\path [-] (5) edge node {} (6);
\path [-] (1) edge node {} (6);


\end{tikzpicture}

%% file: figs/Nposet.tex
\begin{tikzpicture}[scale=0.9]
\tikzset{vertex/.style={circle, inner sep=0pt, outer sep=0pt, minimum width=0.25cm,fill=black},
rvertex/.style = {vertex, fill=red},
bvertex/.style = {vertex, fill=blue},
gvertex/.style = {vertex, fill=green},
nedge/.style = {-}}

\node[vertex] (a) at (0, 0) {};
\node[vertex] (b) at (2, 0) {};
\node[vertex] (c) at (0, 2) {};
\node[vertex] (d) at (2, 2) {};

\path [-] (a) edge node {} (c);
\path [-] (c) edge node {} (b);
\path [-] (b) edge node {} (d);


\end{tikzpicture}

%% file: figs/SP_comp_tree.tex
\begin{tikzpicture}[scale=1.0]
\tikzset{vertex/.style={circle, inner sep=0pt, outer sep=0pt, minimum width=0.25cm,fill=black},
rvertex/.style = {vertex, fill=red},
bvertex/.style = {vertex, fill=blue},
gvertex/.style = {vertex, fill=green},
nedge/.style = {-}}

\node[vertex, label={[label distance=0.1cm]180:$a$}] (A) at (0, 0) {};
\node[vertex, label={[label distance=0.1cm]180:$b$}] (B) at (0, 2) {};
\node[vertex, label={[label distance=0.1cm]180:$c$}] (C) at (1, 0) {};
\node[vertex, label={[label distance=0.1cm]180:$d$}] (D) at (3, 0) {};
\node[vertex, label={[label distance=0.1cm]180:$e$}] (E) at (2, 2){};

\path [-] (A) edge node {} (B);
\path [-] (C) edge node {} (E);
\path [-] (D) edge node {} (E);


\node[circle, inner sep=3pt] (1) at (7, 0) {$+$};
\node[circle, inner sep=3pt] (2) at (6, 0.666666) {$\oplus$};
\node[circle, inner sep=3pt] (3) at (8, 0.666666) {$\oplus$};
\node[circle, inner sep=3pt] (4) at (5.333333, 1.333333) {$a$};
\node[circle, inner sep=3pt] (5) at (6.666666, 1.333333) {$b$};
\node[circle, inner sep=3pt] (6) at (7.333333, 1.333333) {$+$};
\node[circle, inner sep=3pt] (7) at (8.666666, 1.333333) {$e$};
\node[circle, inner sep=3pt] (8) at (6.666666, 2) {$c$};
\node[circle, inner sep=3pt] (9) at (8, 2) {$d$};

\path [-] (2) edge node {} (1);
\path [-] (1) edge node {} (3);
\path [-] (4) edge node {} (2);
\path [-] (2) edge node {} (5);
\path [-] (6) edge node {} (3);
\path [-] (3) edge node {} (7);
\path [-] (8) edge node {} (6);
\path [-] (6) edge node {} (9);

\end{tikzpicture}

%% file: figs/pf_not_dist.tex
\begin{tikzpicture}[scale=1.0]
\tikzset{vertex/.style={circle, inner sep=0pt, outer sep=0pt, minimum width=0.25cm,fill=black},
rvertex/.style = {vertex, fill=red},
bvertex/.style = {vertex, fill=blue},
gvertex/.style = {vertex, fill=green},
nedge/.style = {-}}

\node[vertex] (a) at (0, 0) {};
\node[vertex] (b) at (-1, 1) {};
\node[vertex] (c) at (0, 1) {};
\node[vertex] (d) at (1, 1) {};

\node[vertex] (f) at (2, 0) {};
\node[vertex] (g) at (2, 1) {};
\node[vertex] (h) at (2, 2) {};

\path [-] (b) edge node {} (a);
\path [-] (c) edge node {} (a);
\path [-] (d) edge node {} (a);
\path [-] (f) edge node {} (g);
\path [-] (g) edge node {} (h);


\node[vertex] (1a) at (4.666666, 0) {};
\node[vertex] (1b) at (4, 1) {};
\node[vertex] (1d) at (5.333333, 1) {};

\node[vertex] (1f) at (6.666666, 0) {};
\node[vertex] (1g) at (6, 1) {};
\node[vertex] (1h) at (7.333333, 1) {};
\node[vertex] (1j) at (6, 2) {};

\path [-] (1b) edge node {} (1a);
\path [-] (1a) edge node {} (1d);
\path [-] (1j) edge node {} (1g);
\path [-] (1g) edge node {} (1f);
\path [-] (1f) edge node {} (1h);

\end{tikzpicture}

%% file: figs/int_not_dist.tex
\begin{tikzpicture}[scale=1.0]
\tikzset{vertex/.style={circle, inner sep=0pt, outer sep=0pt, minimum width=0.25cm,fill=black},
rvertex/.style = {vertex, fill=red},
bvertex/.style = {vertex, fill=blue},
gvertex/.style = {vertex, fill=green},
nedge/.style = {-}}

\node[vertex]  (a) at (0, 0) {};
\node[vertex] (b) at (-1, 1) {};
\node[vertex] (c) at (-0.333333, 1) {};
\node[vertex] (d) at (0.333333, 1) {};
\node[vertex] (e) at (1, 1) {};

\node[vertex] (f) at (2, 0) {};
\node[vertex] (g) at (1.666666, 1) {};
\node[vertex] (h) at (2.333333, 1) {};
\node[vertex] (i) at (1.666666, 2) {};
\node[vertex] (j) at (2.333333, 2) {};

\path [-] (b) edge node {} (a);
\path [-] (c) edge node {} (a);
\path [-] (d) edge node {} (a);
\path [-] (e) edge node {} (a);
\path [-] (i) edge node {} (g);
\path [-] (g) edge node {} (f);
\path [-] (f) edge node {} (h);
\path [-] (h) edge node {} (j);


\node[vertex] (1a) at (4.666666, 0) {};
\node[vertex] (1b) at (4, 1) {};
\node[vertex] (1c) at (4.666666, 1) {};
\node[vertex] (1d) at (5.333333, 1) {};
\node[vertex] (1e) at (6, 1) {};

\node[vertex] (1f) at (6.666666, 0) {};
\node[vertex] (1g) at (6.666666, 1) {};
\node[vertex] (1h) at (7.333333, 1) {};
\node[vertex] (1i) at (4.666666, 2) {};
\node[vertex] (1j) at (6.666666, 2) {};

\path [-] (1b) edge node {} (1a);
\path [-] (1i) edge node {} (1c);
\path [-] (1c) edge node {} (1a);
\path [-] (1a) edge node {} (1d);
\path [-] (1e) edge node {} (1f);
\path [-] (1j) edge node {} (1g);
\path [-] (1g) edge node {} (1f);
\path [-] (1f) edge node {} (1h);

\end{tikzpicture}

%% file: figs/downset_counter.tex
\begin{tikzpicture}[scale=1]
\tikzset{vertex/.style={circle, inner sep=0pt, outer sep=0pt, minimum width=0.25cm,fill=black},
rvertex/.style = {vertex, fill=red},
bvertex/.style = {vertex, fill=blue},
gvertex/.style = {vertex, fill=green},
nedge/.style = {-}}

\node[vertex] (A) at (0, 1) {};
\node[vertex] (B) at (1.5, 0) {};
\node[vertex] (C) at (1, 1) {};
\node[vertex] (D) at (2, 1) {};
\node[vertex] (E) at (1.5, 2) {};

\path [-] (B) edge node {} (C);
\path [-] (B) edge node {} (D);
\path [-] (C) edge node {} (E);
\path [-] (D) edge node {} (E);


\node[vertex] (A1) at (4, 0) {};
\node[vertex] (B1) at (4, 1) {};
\node[vertex] (C1) at (4, 2) {};
\node[vertex] (D1) at (5, 0) {};
\node[vertex] (E1) at (5, 1) {};

\path [-] (A1) edge node {} (B1);
\path [-] (B1) edge node {} (C1);
\path [-] (D1) edge node {} (E1);

\end{tikzpicture}

%% file: figs/p1.tex
\begin{tikzpicture}[scale=0.4]
\tikzset{vertex/.style={circle, inner sep=0pt, outer sep=0pt, minimum width=0.25cm,fill=black},
rvertex/.style = {vertex, fill=red},
bvertex/.style = {vertex, fill=blue},
gvertex/.style = {vertex, fill=green},
nedge/.style = {-}}

\node[vertex] (A) at (0, 0) {};
\node[vertex] (B) at (2, 0) {};
\node[vertex] (C) at (-0.666, 2) {};
\node[vertex] (D) at (1, 2) {};
\node[vertex] (E) at (2.666, 2) {};
\node[vertex] (F) at (0, 4) {};
\node[vertex] (G) at (2, 4) {};

\path [-] (A) edge node {} (C);
\path [-] (A) edge node {} (D);
\path [-] (A) edge node {} (E);
\path [-] (B) edge node {} (E);
\path [-] (D) edge node {} (F);
\path [-] (D) edge node {} (G);
\path [-] (C) edge node {} (G);
\path [-] (E) edge node {} (G);


\node[vertex] (A1) at (7, -1) {};
\node[vertex] (B1) at (9, -1) {};
\node[vertex] (C1) at (6.333, 1) {};
\node[vertex] (D1) at (8, 1) {};
\node[vertex] (E1) at (9.666, 1) {};
\node[vertex] (F1) at (6.333, 3) {};
\node[vertex] (G1) at (8, 5) {};

\path [-] (A1) edge node {} (C1);
\path [-] (A1) edge node {} (D1);
\path [-] (A1) edge node {} (E1);
\path [-] (B1) edge node {} (E1);
\path [-] (B1) edge node {} (G1);
\path [-] (C1) edge node {} (F1);
\path [-] (D1) edge node {} (G1);
\path [-] (F1) edge node {} (G1);

\end{tikzpicture}

%% file: figs/p9.tex
\begin{tikzpicture}[scale=0.4]
\tikzset{vertex/.style={circle, inner sep=0pt, outer sep=0pt, minimum width=0.25cm,fill=black},
rvertex/.style = {vertex, fill=red},
bvertex/.style = {vertex, fill=blue},
gvertex/.style = {vertex, fill=green},
nedge/.style = {-}}

\node[vertex] (A) at (0, 0) {};
\node[vertex] (B) at (2, 0) {};
\node[vertex] (C) at (0, 2) {};
\node[vertex] (D) at (2, 2) {};
\node[vertex] (E) at (1, 4) {};
\node[vertex] (F) at (0, 6) {};
\node[vertex] (G) at (2, 6) {};

\path [-] (A) edge node {} (C);
\path [-] (A) edge node {} (D);
\path [-] (B) edge node {} (D);
\path [-] (B) edge node {} (F);
\path [-] (C) edge node {} (E);
\path [-] (D) edge node {} (G);
\path [-] (E) edge node {} (F);
\path [-] (E) edge node {} (G);


\node[vertex] (A1) at (6, 0) {};
\node[vertex] (B1) at (8, 0) {};
\node[vertex] (C1) at (6, 2) {};
\node[vertex] (D1) at (8, 2) {};
\node[vertex] (E1) at (6, 4) {};
\node[vertex] (F1) at (8, 4) {};
\node[vertex] (G1) at (7, 6) {};

\path [-] (A1) edge node {} (C1);
\path [-] (A1) edge node {} (D1);
\path [-] (B1) edge node {} (D1);
\path [-] (C1) edge node {} (E1);
\path [-] (C1) edge node {} (F1);
\path [-] (D1) edge node {} (F1);
\path [-] (D1) edge node {} (G1);
\path [-] (E1) edge node {} (G1);

\end{tikzpicture}

%% file: figs/p2.tex
\begin{tikzpicture}[scale=0.4]
\tikzset{vertex/.style={circle, inner sep=0pt, outer sep=0pt, minimum width=0.25cm,fill=black},
rvertex/.style = {vertex, fill=red},
bvertex/.style = {vertex, fill=blue},
gvertex/.style = {vertex, fill=green},
nedge/.style = {-}}

\node[vertex] (A) at (0, 0) {};
\node[vertex] (B) at (2, 0) {};
\node[vertex] (C) at (-0.666, 2) {};
\node[vertex] (D) at (1, 2) {};
\node[vertex] (E) at (2.666, 2) {};
\node[vertex] (F) at (0, 4) {};
\node[vertex] (G) at (2, 4) {};

\path [-] (A) edge node {} (D);
\path [-] (A) edge node {} (E);
\path [-] (B) edge node {} (E);
\path [-] (D) edge node {} (F);
\path [-] (D) edge node {} (G);
\path [-] (C) edge node {} (G);
\path [-] (E) edge node {} (G);


\node[vertex] (A1) at (6.333, -1) {};
\node[vertex] (B1) at (9.666, -1) {};
\node[vertex] (C1) at (6.333, 1) {};
\node[vertex] (D1) at (8, -1) {};
\node[vertex] (E1) at (9.666, 1) {};
\node[vertex] (F1) at (6.333, 3) {};
\node[vertex] (G1) at (8, 5) {};

\path [-] (A1) edge node {} (C1);
\path [-] (A1) edge node {} (E1);
\path [-] (B1) edge node {} (E1);
\path [-] (B1) edge node {} (G1);
\path [-] (C1) edge node {} (F1);
\path [-] (D1) edge node {} (G1);
\path [-] (F1) edge node {} (G1);

\end{tikzpicture}

%% file: figs/p10.tex
\begin{tikzpicture}[scale=0.4]
\tikzset{vertex/.style={circle, inner sep=0pt, outer sep=0pt, minimum width=0.25cm,fill=black},
rvertex/.style = {vertex, fill=red},
bvertex/.style = {vertex, fill=blue},
gvertex/.style = {vertex, fill=green},
nedge/.style = {-}}

\node[vertex] (A) at (0, 0) {};
\node[vertex] (B) at (2, 0) {};
\node[vertex] (C) at (0, 2) {};
\node[vertex] (D) at (2, 2) {};
\node[vertex] (E) at (0, 4) {};
\node[vertex] (F) at (2, 4) {};
\node[vertex] (G) at (1, 6) {};

\path [-] (A) edge node {} (C);
\path [-] (A) edge node {} (D);
\path [-] (B) edge node {} (D);
\path [-] (B) edge node {} (G);
\path [-] (C) edge node {} (E);
\path [-] (C) edge node {} (F);
\path [-] (D) edge node {} (E);
\path [-] (D) edge node {} (F);
\path [-] (E) edge node {} (G);


\node[vertex] (A1) at (6, 0) {};
\node[vertex] (B1) at (8, 0) {};
\node[vertex] (C1) at (6, 2) {};
\node[vertex] (D1) at (8, 2) {};
\node[vertex] (E1) at (6, 4) {};
\node[vertex] (F1) at (8, 4) {};
\node[vertex] (G1) at (7, 6) {};

\path [-] (A1) edge node {} (C1);
\path [-] (A1) edge node {} (D1);
\path [-] (B1) edge node {} (E1);
\path [-] (C1) edge node {} (E1);
\path [-] (C1) edge node {} (F1);
\path [-] (D1) edge node {} (F1);
\path [-] (D1) edge node {} (G1);
\path [-] (E1) edge node {} (G1);

\end{tikzpicture}

%% file: figs/p3.tex
\begin{tikzpicture}[scale=0.4]
\tikzset{vertex/.style={circle, inner sep=0pt, outer sep=0pt, minimum width=0.25cm,fill=black},
rvertex/.style = {vertex, fill=red},
bvertex/.style = {vertex, fill=blue},
gvertex/.style = {vertex, fill=green},
nedge/.style = {-}}

\node[vertex] (A) at (-0.666, 0) {};
\node[vertex] (B) at (1, 0) {};
\node[vertex] (C) at (2.666, 0) {};
\node[vertex] (D) at (0, 2) {};
\node[vertex] (E) at (2, 2) {};
\node[vertex] (F) at (0, 4) {};
\node[vertex] (G) at (2, 4) {};

\path [-] (A) edge node {} (D);
\path [-] (A) edge node {} (E);
\path [-] (B) edge node {} (D);
\path [-] (B) edge node {} (E);
\path [-] (C) edge node {} (E);
\path [-] (D) edge node {} (F);
\path [-] (D) edge node {} (G);
\path [-] (E) edge node {} (G);


\node[vertex] (A1) at (6.333, -1) {};
\node[vertex] (B1) at (8, -1) {};
\node[vertex] (C1) at (9.666, -1) {};
\node[vertex] (D1) at (7, 1) {};
\node[vertex] (E1) at (9, 1) {};
\node[vertex] (F1) at (8, 3) {};
\node[vertex] (G1) at (8, 5) {};

\path [-] (A1) edge node {} (D1);
\path [-] (A1) edge node {} (E1);
\path [-] (B1) edge node {} (D1);
\path [-] (B1) edge node {} (E1);
\path [-] (C1) edge node {} (E1);
\path [-] (C1) edge node {} (G1);
\path [-] (D1) edge node {} (F1);
\path [-] (F1) edge node {} (G1);

\end{tikzpicture}

%% file: figs/p11.tex
\begin{tikzpicture}[scale=0.4]
\tikzset{vertex/.style={circle, inner sep=0pt, outer sep=0pt, minimum width=0.25cm,fill=black},
rvertex/.style = {vertex, fill=red},
bvertex/.style = {vertex, fill=blue},
gvertex/.style = {vertex, fill=green},
nedge/.style = {-}}

\node[vertex] (A) at (0, 0) {};
\node[vertex] (B) at (2, 0) {};
\node[vertex] (C) at (0, 2) {};
\node[vertex] (D) at (1.333, 2) {};
\node[vertex] (E) at (0, 4) {};
\node[vertex] (F) at (2, 4) {};
\node[vertex] (G) at (1, 6) {};

\path [-] (A) edge node {} (C);
\path [-] (A) edge node {} (D);
\path [-] (B) edge node {} (D);
\path [-] (B) edge node {} (F);
\path [-] (C) edge node {} (E);
\path [-] (C) edge node {} (F);
\path [-] (E) edge node {} (G);
\path [-] (F) edge node {} (G);


\node[vertex] (A1) at (6, 0) {};
\node[vertex] (B1) at (8, 0) {};
\node[vertex] (C1) at (6, 2) {};
\node[vertex] (D1) at (8, 2) {};
\node[vertex] (E1) at (6, 4) {};
\node[vertex] (F1) at (8, 4) {};
\node[vertex] (G1) at (7, 6) {};

\path [-] (A1) edge node {} (C1);
\path [-] (A1) edge node {} (D1);
\path [-] (B1) edge node {} (D1);
\path [-] (B1) edge node {} (E1);
\path [-] (C1) edge node {} (E1);
\path [-] (C1) edge node {} (F1);
\path [-] (D1) edge node {} (G1);
\path [-] (E1) edge node {} (G1);

\end{tikzpicture}

%% file: figs/p4.tex
\begin{tikzpicture}[scale=0.4]
\tikzset{vertex/.style={circle, inner sep=0pt, outer sep=0pt, minimum width=0.25cm,fill=black},
rvertex/.style = {vertex, fill=red},
bvertex/.style = {vertex, fill=blue},
gvertex/.style = {vertex, fill=green},
nedge/.style = {-}}

\node[vertex] (A) at (0, 0) {};
\node[vertex] (B) at (2, 0) {};
\node[vertex] (C) at (0, 2) {};
\node[vertex] (D) at (2, 2) {};
\node[vertex] (E) at (0, 4) {};
\node[vertex] (F) at (2, 4) {};
\node[vertex] (G) at (1, 6) {};

\path [-] (A) edge node {} (C);
\path [-] (A) edge node {} (D);
\path [-] (B) edge node {} (D);
\path [-] (C) edge node {} (E);
\path [-] (C) edge node {} (F);
\path [-] (D) edge node {} (F);
\path [-] (E) edge node {} (G);
\path [-] (F) edge node {} (G);


\node[vertex] (A1) at (6, -1) {};
\node[vertex] (B1) at (8, -1) {};
\node[vertex] (C1) at (6, 1) {};
\node[vertex] (D1) at (8, 1) {};
\node[vertex] (E1) at (7, 3) {};
\node[vertex] (F1) at (7, 5) {};
\node[vertex] (G1) at (7, 7) {};

\path [-] (A1) edge node {} (C1);
\path [-] (A1) edge node {} (D1);
\path [-] (B1) edge node {} (D1);
\path [-] (B1) edge node {} (F1);
\path [-] (C1) edge node {} (E1);
\path [-] (D1) edge node {} (G1);
\path [-] (E1) edge node {} (F1);
\path [-] (F1) edge node {} (G1);

\end{tikzpicture}

%% file: figs/p12.tex
\begin{tikzpicture}[scale=0.4]
\tikzset{vertex/.style={circle, inner sep=0pt, outer sep=0pt, minimum width=0.25cm,fill=black},
rvertex/.style = {vertex, fill=red},
bvertex/.style = {vertex, fill=blue},
gvertex/.style = {vertex, fill=green},
nedge/.style = {-}}

\node[vertex] (A) at (0, 0) {};
\node[vertex] (B) at (2, 0) {};
\node[vertex] (C) at (0, 2) {};
\node[vertex] (D) at (2, 2) {};
\node[vertex] (E) at (0, 4) {};
\node[vertex] (F) at (2, 4) {};
\node[vertex] (G) at (1, 6) {};

\path [-] (A) edge node {} (C);
\path [-] (A) edge node {} (D);
\path [-] (B) edge node {} (D);
\path [-] (C) edge node {} (E);
\path [-] (C) edge node {} (F);
\path [-] (D) edge node {} (F);
\path [-] (F) edge node {} (G);


\node[vertex] (A1) at (6, -1) {};
\node[vertex] (B1) at (8, -1) {};
\node[vertex] (C1) at (6, 1) {};
\node[vertex] (D1) at (8, 1) {};
\node[vertex] (E1) at (7, 3) {};
\node[vertex] (F1) at (7, 5) {};
\node[vertex] (G1) at (7, 7) {};

\path [-] (A1) edge node {} (C1);
\path [-] (A1) edge node {} (D1);
\path [-] (B1) edge node {} (D1);
\path [-] (B1) edge node {} (F1);
\path [-] (C1) edge node {} (E1);
\path [-] (E1) edge node {} (F1);
\path [-] (F1) edge node {} (G1);

\end{tikzpicture}

%% file: figs/p5.tex
\begin{tikzpicture}[scale=0.4]
\tikzset{vertex/.style={circle, inner sep=0pt, outer sep=0pt, minimum width=0.25cm,fill=black},
rvertex/.style = {vertex, fill=red},
bvertex/.style = {vertex, fill=blue},
gvertex/.style = {vertex, fill=green},
nedge/.style = {-}}

\node[vertex] (A) at (-0.666, 0) {};
\node[vertex] (B) at (1, 0) {};
\node[vertex] (C) at (2.666, 0) {};
\node[vertex] (D) at (0, 2) {};
\node[vertex] (E) at (2, 2) {};
\node[vertex] (F) at (0, 4) {};
\node[vertex] (G) at (2, 4) {};

\path [-] (A) edge node {} (F);
\path [-] (A) edge node {} (D);
\path [-] (B) edge node {} (D);
\path [-] (C) edge node {} (D);
\path [-] (C) edge node {} (E);
\path [-] (D) edge node {} (G);
\path [-] (E) edge node {} (F);
\path [-] (E) edge node {} (G);


\node[vertex] (A1) at (6.333, -1) {};
\node[vertex] (B1) at (8, -1) {};
\node[vertex] (C1) at (9.666, -1) {};
\node[vertex] (D1) at (7, 1) {};
\node[vertex] (E1) at (9, 1) {};
\node[vertex] (F1) at (8, 3) {};
\node[vertex] (G1) at (8, 5) {};

\path [-] (A1) edge node {} (D1);
\path [-] (A1) edge node {} (E1);
\path [-] (B1) edge node {} (F1);
\path [-] (B1) edge node {} (E1);
\path [-] (C1) edge node {} (E1);
\path [-] (C1) edge node {} (G1);
\path [-] (D1) edge node {} (F1);
\path [-] (F1) edge node {} (G1);

\end{tikzpicture}

%% file: figs/p13.tex
\begin{tikzpicture}[scale=0.4]
\tikzset{vertex/.style={circle, inner sep=0pt, outer sep=0pt, minimum width=0.25cm,fill=black},
rvertex/.style = {vertex, fill=red},
bvertex/.style = {vertex, fill=blue},
gvertex/.style = {vertex, fill=green},
nedge/.style = {-}}

\node[vertex] (A) at (0, 0) {};
\node[vertex] (B) at (2, 0) {};
\node[vertex] (C) at (0, 2) {};
\node[vertex] (D) at (2, 2) {};
\node[vertex] (E) at (-0.666, 4) {};
\node[vertex] (F) at (1, 4) {};
\node[vertex] (G) at (2.666, 4) {};

\path [-] (A) edge node {} (C);
\path [-] (A) edge node {} (D);
\path [-] (B) edge node {} (D);
\path [-] (C) edge node {} (E);
\path [-] (C) edge node {} (F);
\path [-] (C) edge node {} (G);
\path [-] (D) edge node {} (F);
\path [-] (D) edge node {} (G);


\node[vertex] (A1) at (6, -1) {};
\node[vertex] (B1) at (8, -1) {};
\node[vertex] (C1) at (6, 1) {};
\node[vertex] (D1) at (7.666, 1) {};
\node[vertex] (E1) at (7, 3) {};
\node[vertex] (F1) at (6, 5) {};
\node[vertex] (G1) at (8, 5) {};

\path [-] (A1) edge node {} (C1);
\path [-] (A1) edge node {} (D1);
\path [-] (B1) edge node {} (D1);
\path [-] (B1) edge node {} (F1);
\path [-] (B1) edge node {} (G1);
\path [-] (C1) edge node {} (E1);
\path [-] (E1) edge node {} (F1);
\path [-] (E1) edge node {} (G1);

\end{tikzpicture}

%% file: figs/p6.tex
\begin{tikzpicture}[scale=0.4]
\tikzset{vertex/.style={circle, inner sep=0pt, outer sep=0pt, minimum width=0.25cm,fill=black},
rvertex/.style = {vertex, fill=red},
bvertex/.style = {vertex, fill=blue},
gvertex/.style = {vertex, fill=green},
nedge/.style = {-}}

\node[vertex] (A) at (1, 0) {};
\node[vertex] (B) at (0, 2) {};
\node[vertex] (C) at (2, 2) {};
\node[vertex] (D) at (0, 4) {};
\node[vertex] (E) at (2, 4) {};
\node[vertex] (F) at (0, 6) {};
\node[vertex] (G) at (2, 6) {};

\path [-] (A) edge node {} (B);
\path [-] (A) edge node {} (C);
\path [-] (B) edge node {} (D);
\path [-] (C) edge node {} (D);
\path [-] (C) edge node {} (E);
\path [-] (D) edge node {} (G);
\path [-] (E) edge node {} (F);
\path [-] (E) edge node {} (G);


\node[vertex] (A1) at (7, -1) {};
\node[vertex] (B1) at (6, 1) {};
\node[vertex] (C1) at (8, 1) {};
\node[vertex] (D1) at (6, 3) {};
\node[vertex] (E1) at (8, 3) {};
\node[vertex] (F1) at (7, 5) {};
\node[vertex] (G1) at (7, 7) {};

\path [-] (A1) edge node {} (B1);
\path [-] (A1) edge node {} (C1);
\path [-] (B1) edge node {} (D1);
\path [-] (B1) edge node {} (E1);
\path [-] (C1) edge node {} (E1);
\path [-] (C1) edge node {} (G1);
\path [-] (D1) edge node {} (F1);
\path [-] (F1) edge node {} (G1);

\end{tikzpicture}

%% file: figs/p14.tex
\begin{tikzpicture}[scale=0.4]
\tikzset{vertex/.style={circle, inner sep=0pt, outer sep=0pt, minimum width=0.25cm,fill=black},
rvertex/.style = {vertex, fill=red},
bvertex/.style = {vertex, fill=blue},
gvertex/.style = {vertex, fill=green},
nedge/.style = {-}}

\node[vertex] (A) at (0, 0) {};
\node[vertex] (B) at (2, 0) {};
\node[vertex] (C) at (0, 2) {};
\node[vertex] (D) at (2, 2) {};
\node[vertex] (E) at (-0.666, 4) {};
\node[vertex] (F) at (1, 4) {};
\node[vertex] (G) at (2.666, 4) {};

\path [-] (A) edge node {} (C);
\path [-] (A) edge node {} (F);
\path [-] (B) edge node {} (C);
\path [-] (B) edge node {} (D);
\path [-] (C) edge node {} (G);
\path [-] (D) edge node {} (E);
\path [-] (D) edge node {} (F);
\path [-] (D) edge node {} (G);


\node[vertex] (A1) at (6, -1) {};
\node[vertex] (B1) at (8, -1) {};
\node[vertex] (C1) at (6, 1) {};
\node[vertex] (D1) at (7.5, 1) {};
\node[vertex] (E1) at (6, 3) {};
\node[vertex] (F1) at (8, 3) {};
\node[vertex] (G1) at (7, 5) {};

\path [-] (A1) edge node {} (C1);
\path [-] (A1) edge node {} (D1);
\path [-] (B1) edge node {} (D1);
\path [-] (B1) edge node {} (F1);
\path [-] (B1) edge node {} (G1);
\path [-] (C1) edge node {} (E1);
\path [-] (C1) edge node {} (F1);
\path [-] (E1) edge node {} (G1);

\end{tikzpicture}

%% file: figs/p7.tex
\begin{tikzpicture}[scale=0.4]
\tikzset{vertex/.style={circle, inner sep=0pt, outer sep=0pt, minimum width=0.25cm,fill=black},
rvertex/.style = {vertex, fill=red},
bvertex/.style = {vertex, fill=blue},
gvertex/.style = {vertex, fill=green},
nedge/.style = {-}}

\node[vertex] (A) at (1, 0) {};
\node[vertex] (B) at (0, 2) {};
\node[vertex] (C) at (2, 2) {};
\node[vertex] (D) at (0, 4) {};
\node[vertex] (E) at (2, 4) {};
\node[vertex] (F) at (0, 6) {};
\node[vertex] (G) at (2, 6) {};

\path [-] (A) edge node {} (C);
\path [-] (B) edge node {} (D);
\path [-] (C) edge node {} (D);
\path [-] (C) edge node {} (E);
\path [-] (D) edge node {} (G);
\path [-] (E) edge node {} (F);
\path [-] (E) edge node {} (G);


\node[vertex] (A1) at (6, -1) {};
\node[vertex] (B1) at (8, -1) {};
\node[vertex] (C1) at (7, 1) {};
\node[vertex] (D1) at (6, 3) {};
\node[vertex] (E1) at (8, 3) {};
\node[vertex] (F1) at (7, 5) {};
\node[vertex] (G1) at (7, 7) {};

\path [-] (A1) edge node {} (C1);
\path [-] (B1) edge node {} (E1);
\path [-] (B1) edge node {} (G1);
\path [-] (C1) edge node {} (D1);
\path [-] (C1) edge node {} (E1);
\path [-] (D1) edge node {} (F1);
\path [-] (F1) edge node {} (G1);

\end{tikzpicture}

%% file: figs/p15.tex
\begin{tikzpicture}[scale=0.4]
\tikzset{vertex/.style={circle, inner sep=0pt, outer sep=0pt, minimum width=0.25cm,fill=black},
rvertex/.style = {vertex, fill=red},
bvertex/.style = {vertex, fill=blue},
gvertex/.style = {vertex, fill=green},
nedge/.style = {-}}

\node[vertex] (A) at (-0.666, 0) {};
\node[vertex] (B) at (1, 0) {};
\node[vertex] (C) at (2.666, 0) {};
\node[vertex] (D) at (0, 2) {};
\node[vertex] (E) at (2, 2) {};
\node[vertex] (F) at (0, 4) {};
\node[vertex] (G) at (2, 4) {};

\path [-] (B) edge node {} (D);
\path [-] (B) edge node {} (E);
\path [-] (C) edge node {} (E);
\path [-] (D) edge node {} (F);
\path [-] (D) edge node {} (G);
\path [-] (E) edge node {} (G);


\node[vertex] (A1) at (5.333, -1) {};
\node[vertex] (B1) at (7, -1) {};
\node[vertex] (C1) at (8.666, -1) {};
\node[vertex] (D1) at (6, 1) {};
\node[vertex] (E1) at (8, 1) {};
\node[vertex] (F1) at (6.5, 3) {};
\node[vertex] (G1) at (7, 5) {};

\path [-] (A1) edge node {} (D1);
\path [-] (A1) edge node {} (E1);
\path [-] (B1) edge node {} (E1);
\path [-] (B1) edge node {} (G1);
\path [-] (D1) edge node {} (F1);
\path [-] (F1) edge node {} (G1);

\end{tikzpicture}

%% file: figs/p8.tex
\begin{tikzpicture}[scale=0.4]
\tikzset{vertex/.style={circle, inner sep=0pt, outer sep=0pt, minimum width=0.25cm,fill=black},
rvertex/.style = {vertex, fill=red},
bvertex/.style = {vertex, fill=blue},
gvertex/.style = {vertex, fill=green},
nedge/.style = {-}}

\node[vertex] (A) at (0, 0) {};
\node[vertex] (B) at (2, 0) {};
\node[vertex] (C) at (0, 2) {};
\node[vertex] (D) at (2, 2) {};
\node[vertex] (E) at (0, 4) {};
\node[vertex] (F) at (2, 4) {};
\node[vertex] (G) at (1, 6) {};

\path [-] (A) edge node {} (C);
\path [-] (A) edge node {} (D);
\path [-] (B) edge node {} (D);
\path [-] (B) edge node {} (E);
\path [-] (C) edge node {} (E);
\path [-] (D) edge node {} (F);
\path [-] (D) edge node {} (G);
\path [-] (E) edge node {} (G);


\node[vertex] (A1) at (6, 0) {};
\node[vertex] (B1) at (8, 0) {};
\node[vertex] (C1) at (6, 2) {};
\node[vertex] (D1) at (8, 2) {};
\node[vertex] (E1) at (6, 4) {};
\node[vertex] (F1) at (8, 4) {};
\node[vertex] (G1) at (7, 6) {};

\path [-] (A1) edge node {} (C1);
\path [-] (A1) edge node {} (F1);
\path [-] (B1) edge node {} (D1);
\path [-] (B1) edge node {} (C1);
\path [-] (C1) edge node {} (E1);
\path [-] (D1) edge node {} (F1);
\path [-] (D1) edge node {} (G1);
\path [-] (E1) edge node {} (G1);

\end{tikzpicture}

%% file: figs/p16.tex
\begin{tikzpicture}[scale=0.4]
\tikzset{vertex/.style={circle, inner sep=0pt, outer sep=0pt, minimum width=0.25cm,fill=black},
rvertex/.style = {vertex, fill=red},
bvertex/.style = {vertex, fill=blue},
gvertex/.style = {vertex, fill=green},
nedge/.style = {-}}

\node[vertex] (A) at (0, 0) {};
\node[vertex] (B) at (2, 0) {};
\node[vertex] (C) at (-0.666, 2) {};
\node[vertex] (D) at (1, 2) {};
\node[vertex] (E) at (2.666, 2) {};
\node[vertex] (F) at (0, 4) {};
\node[vertex] (G) at (2, 4) {};

\path [-] (A) edge node {} (C);
\path [-] (A) edge node {} (D);
\path [-] (A) edge node {} (E);
\path [-] (B) edge node {} (E);
\path [-] (C) edge node {} (F);
\path [-] (C) edge node {} (G);
\path [-] (E) edge node {} (G);


\node[vertex] (A1) at (7, -1) {};
\node[vertex] (B1) at (9, -1) {};
\node[vertex] (C1) at (6.333, 1) {};
\node[vertex] (D1) at (8, 1) {};
\node[vertex] (E1) at (9.666, 1) {};
\node[vertex] (F1) at (8, 3) {};
\node[vertex] (G1) at (8, 5) {};

\path [-] (A1) edge node {} (C1);
\path [-] (A1) edge node {} (D1);
\path [-] (A1) edge node {} (E1);
\path [-] (B1) edge node {} (E1);
\path [-] (B1) edge node {} (G1);
\path [-] (C1) edge node {} (F1);
\path [-] (F1) edge node {} (G1);

\end{tikzpicture}